\newcommand\be{\begin{equation}}
\newcommand\ee{\end{equation}}
\newcommand\bea{\begin{eqnarray}}
\newcommand\eea{\end{eqnarray}}
\newcommand\bi{\begin{itemize}}
\newcommand\ei{\end{itemize}}
\newcommand\ben{\begin{enumerate}}
\newcommand\een{\end{enumerate}}
\newtheorem{thm}{Theorem}[section]
\newtheorem{conj}[thm]{Conjecture}
\newtheorem{cor}[thm]{Corollary}
\newtheorem{lem}[thm]{Lemma}
\theoremstyle{definition}
\newtheorem{defi}[thm]{Definition}
\newtheorem{rek}[thm]{Remark}
\theoremstyle{thm}
\newcommand{\Vol}{\mathrm{Vol}}
\newcommand{\Prob}{\mathbb{P}}
\newcommand{\R}{\ensuremath{\mathbb{R}}}
\newcommand{\Z}{\ensuremath{\mathbb{Z}}}
\numberwithin{equation}{section}
\begin{document}

\title{Benfordness of measurements resulting from box fragmentation}

\author{Livia Betti}
\email{lbetti@u.rochester.edu} \address{Department of Mathematics, University of Rochester, Rochester, NY 14627}
\author{Irfan Durmi\'{c}}
\email{idurmic@student.jyu.fi} \address{Department of
  Mathematics and Statistics, University of Jyväskylä, Jyväskylä, FI, 40740}
\author{Zoe McDonald}
\email{zmcd@bu.edu} \address{Department of Mathematics and Statistics, Boston University, Boston, MA, 02215}
\author{Jack B. Miller}
\email{jack.miller.jbm82@yale.edu} \address{Department of Mathematics, Yale University, New Haven, CT, 06511}
\author{Steven J. Miller}
\email{Steven.J.Miller@williams.edu} \address{Department of
  Mathematics and Statistics, Williams College, Williamstown, MA 01267}
 
\subjclass[2010]{60A10, 11K06  (primary),  (secondary) 60E10}

\keywords{Benford's Law, Digit Bias, Fragmentation Process.}  

\thanks{This work was supported by NSF grant DMS1947438, and Williams College. }

\begin{abstract}
We make progress on a conjecture made by \cite{DM}, which states that the $d$-dimensional frames of $m$-dimensional boxes resulting from a fragmentation process satisfy Benford's law for all $1 \leq d \leq  m$.
We provide a sufficient condition for Benford's law to be satisfied, namely that the maximum product of $d$ sides is itself a Benford random variable.
Motivated to produce an example of such a fragmentation process, we show that processes constructed from log-uniform proportion cuts satisfy the maximum criterion for $d=1$.
\end{abstract}

\date{\today}

\maketitle

\tableofcontents


\section{Introduction}\label{sec: introduction}

\subsection{Historical Background}
At the dawn of the $20^{th}$ century, the astronomer and mathematician Simon Newcomb 
observed that the logarithmic books at his workplace showed a lot of wear and tear at the 
early pages, but the more he progressed through the book, the less usage could be observed.
Newcomb deduced that his colleagues had a "bias" towards numbers starting with the digit 
$1$. In particular, the digit $1$ shows up as the first digit roughly $30\%$ of the time, 
the digit $2$ about $17\%$ of the time, and so on. While he did come up with a mathematical 
model for this interesting relationship, his work stayed mostly unnoticed.

It took another 57 years after Newcomb's discovery for physicist Frank Benford to make the exact same observation as Newcomb: the first pages of logarithmic tables were used far more than others. He formulated this law as follows. 
\begin{defi}\cite[Page 554]{Ben} \label{def:benforiginal}
We say that data exhibits \emph{(weak) Benford behavior} if the frequency $F_d$ of leading digit $d$ satisfies
\bea
  F_d \ = \ \log_{10}{\frac{d \ + \ 1}{d}}.\label{eqn:benorg}
\eea
\end{defi}

Nowadays, Benford's Law is used in detecting many different forms of fraud, and its prevalence in the world fascinates not only mathematicians, but many other scientists as well (to learn more about Benford's Law and its many applications, we recommend~\cite{BeHi, Nig, Mil1} to name a few). 




In 1986, Lemons \cite{Lemons} proposed using Benford's law to analyze the partitioning of a conserved quantity. Since then, driven by the potential application to nuclear fragmentation, mathematicians and physicists have taken an interest in the Benfordness of various fragmentation processes. Among these processes of interest is \textit{stick fragmentation}.
In the unrestricted stick fragmentation model, one begins with a stick of length $L$. Draw $p_1$ from a probability distribution on $(0,1)$. This fragments the stick into two sub-sticks of lengths $p_1L$ and $(1-p_1)L$. For each sub-stick, draw another independent probability ($p_2$ and $p_3$, respectively) from the same distribution. Repeat this process $N$ times.
Of particular interest is whether this fragmentation process follows Benford's law.

\subsection{Previous Work on Fragmentation}

An important definition when studying a more precise statistical version of Benford's law is the notion of the significand of a real number, i.e., its leading digits in scientific notation.

\begin{defi}[Significand]
Given a positive real number $x$, we say that its \textit{significand base $B>1$}, denoted $S_B(x)$, is the unique real number $S_B(x) \in [1,B)$ such that $k = \log_B(x)-\log_B(S_B(x))$ is an integer. One can then write $x = S_B(x) \cdot B^{k}$.
\end{defi}

As is common practice with these techniques involving proofs of Benford's law, we define a stricter version of Benford behavior.

\begin{defi}[Strong Benford's Law]
We say that a sequence of random variables $X^{(n)}$ \textit{converges to strong Benford behavior} in the base $B$ if
\begin{equation}
\label{eq:StrongBenfordBehavior}
\Prob(S_B(X^{(n)}) \leq D) \to \log_B(D) ,
\end{equation}
for all $D \in [1,B]$. Notice by compactness that this implies uniform convergence of (\ref{eq:StrongBenfordBehavior}).
\end{defi}

We may now state the previous results on box fragmentation.
Becker, et al. \cite{Beck} proved a theorem regarding unrestricted stick fragmentation (compare with their Theorem 1.5) which was later generalized by \cite{DM} in the form of the following theorem.

\begin{thm}[Benfordness of the $m$-Volumes of a Branching-Fragmentation Process]
\label{thm:Irfan'sResult}
Fix a continuous probability density $f:(0,1)\to\R$ such that its Mellin transform\footnote{The Mellin transform is related to the Fourier transform by a logarithmic change of variables, which we will discuss further in Section 4. Often, the Mellin and Fourier transforms are useful tool for stating regularity conditions.} $\mathcal{M}[f_u]$ satisfies
\begin{equation}
\label{eq:MellinCondition}
\lim_{n\to\infty} \sum_{\substack{\ell = -\infty \\ \ell \neq 0}}^\infty \left| {\prod_{u=1}^{nm} \mathcal{M}[f_u]\left(1-\frac{2\pi i \ell}{\log 10}\right)} \right| \ = \ 0,
\end{equation}
where each $f_u(t)$ is either $f(t)$ or $f(1-t)$ (the density of $1-P$ if $P$ has 
density $f$).  Given an $m$-dimensional box of $m$-dimensional volume $V$, 
we independently choose density cuts $p_1, p_2, \dots,p_{nm \ - \ 1}, 
p_{nm}$ from the unit interval stemming from the probability density function $f$ and the associated random variable $P$. 
After $N$ iterations we have
\begin{align}
V_1 \ &= \  Vp_1p_2p_4\cdots p_{2^{nm -2}} p_{2^{nm - 1}}, \quad V_2 \ = \ Vp_1p_2p_4\cdots p_{2^{nm -2}}(1-p_{2^{nm - 1}}), \quad \ldots,
\nonumber \\
V_{(2^m)^n} \ &= \    V(1-p_1)(1-p_3)(1-p_7) \cdots (1-p_{2^{nm - 1}-1})  (1-p_{2^{nm}-1}).
\end{align}
Let $\varphi_s$ denote the significand indicator function
\begin{equation}
    \varphi_s (x) \ \coloneqq \ \begin{cases}
        1 \ \ \ s_{10}(x) \ \leq \ s \\
        0 \ \ \ \mbox{otherwise}
    \end{cases}.
\end{equation}
Let $\rho_n(s)$ denote the fraction of volumes $V_1,\ldots,V_{(2^m)^n}$ with significand at most $s$, i.e.,
\begin{equation}
\rho_m^{(n)}(s) \ := \  \frac{ \sum_{i = 1}^{(2^m)^n} \varphi_s(V_i)}{(2^m)^n}.
\end{equation}
We have that the following two conditions hold.
\begin{enumerate}
\item $\lim _{n \to \infty} \mathbb{E} [\rho_m^{(n)}(s)] = \log_{10}(s)$,
\item $\lim _{n \to \infty} {\rm Var}\left(\rho_m^{(n)}(s)\right) = 0$.
\end{enumerate}
Thus, in the limit, the $m$-dimensional volumes resulting from such a branching-fragmentation process exhibit Benford behavior with high probability.
\end{thm}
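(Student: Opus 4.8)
The plan is to translate the claim into an equidistribution-mod-one statement for sums of logarithms and then apply Fourier/Mellin analysis — directly for the mean, and via a least-common-ancestor decomposition of the branching tree for the variance. Write $c := \log_{10}s\in[0,1]$; since the significand of $x$ is $\le s$ precisely when $\{\log_{10}x\}\le c$ (with $\{\cdot\}$ the fractional part), we have $\varphi_s(V_i)=\1(\{\log_{10}V_i\}\le c)$, so everything is controlled by the distribution mod $1$ of $\log_{10}V_i$. Each leaf volume is $V_i=V\prod_{u=1}^{nm}q_u^{(i)}$, where $q_1^{(i)},\dots,q_{nm}^{(i)}$ are the proportions met along the root-to-leaf path of $i$ (each a $p_j$ or a $1-p_j$); as these sit at distinct tree nodes they are mutually independent, so $\log_{10}V_i$ is a deterministic constant plus a sum of $nm$ independent summands with densities among $\{f,\ f(1-\cdot)\}$. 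In particular, the dimension $m$ enters only through the number $nm$ of cuts, so the $m$-volumes are distributed exactly as the pieces of a single stick cut by the same binary branching process; the proof below follows the strategy of~\cite{Beck} and~\cite{DM}.

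\emph{Part (1).} For a fixed leaf $i$ and nonzero integer $\ell$, the $\ell$-th Fourier coefficient of the law of $\{\log_{10}V_i\}$ is
\begin{equation*}
\mathbb{E}\big[e^{2\pi i\ell\log_{10}V_i}\big]\ =\ V^{2\pi i\ell/\log 10}\prod_{u=1}^{nm}\mathbb{E}\big[(q_u^{(i)})^{2\pi i\ell/\log 10}\big]\ =\ V^{2\pi i\ell/\log 10}\prod_{u=1}^{nm}\mathcal{M}[f_u]\!\Big(1+\tfrac{2\pi i\ell}{\log 10}\Big),
\end{equation*}
of modulus $\prod_{u=1}^{nm}\big|\mathcal{M}[f_u](1-\tfrac{2\pi i\ell}{\log 10})\big|$ (conjugating, the $f_u$ being real). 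The standard discrepancy estimate — expand $\1_{[0,c]}$ into a trigonometric series and squeeze it between trigonometric polynomials, or, once $n$ is large, integrate the absolutely convergent Fourier series of the density of $\{\log_{10}V_i\}$ — then yields
\begin{equation*}
\Big|\,\mathbb{P}\big(\{\log_{10}V_i\}\le c\big)-c\,\Big|\ \le\ \frac1\pi\sum_{\substack{\ell=-\infty\\ \ell\ne 0}}^{\infty}\ \prod_{u=1}^{nm}\Big|\mathcal{M}[f_u]\!\Big(1-\tfrac{2\pi i\ell}{\log 10}\Big)\Big|.
\end{equation*}
Bounding the product over any pattern of the $f_u$ by $\big|\mathcal{M}[f](1-\tfrac{2\pi i\ell}{\log 10})\big|^{nm}+\big|\mathcal{M}[f(1-\cdot)](1-\tfrac{2\pi i\ell}{\log 10})\big|^{nm}$ (valid since each factor has modulus $\le1$) and invoking~\eqref{eq:MellinCondition} for the two constant patterns forces the right-hand side to $0$ as $n\to\infty$, uniformly over leaves $i$. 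Averaging over the $(2^m)^n$ leaves gives $\mathbb{E}[\rho_m^{(n)}(s)]\to c=\log_{10}(s)$.

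\emph{Part (2).} As the mean tends to $c$, it suffices to prove $\mathbb{E}[(\rho_m^{(n)}(s))^2]\to c^2$. Expand $\mathbb{E}[(\rho_m^{(n)}(s))^2]=(2^m)^{-2n}\sum_{i,j}\mathbb{E}[\varphi_s(V_i)\varphi_s(V_j)]$ and sort the ordered pairs $(i,j)$ by the depth $k$ of the least common ancestor of the leaves $i,j$ in the branching tree; the diagonal $i=j$ contributes at most $(2^m)^{-n}$. For $i\ne j$ with ancestor at depth $k$, conditioning on the $\sigma$-algebra $\mathcal{F}_k$ generated by all cuts at depths $0,\dots,k$ renders $V_i$ and $V_j$ conditionally independent — their remaining cuts lie in disjoint subtrees — each being, given $\mathcal{F}_k$, an $\mathcal{F}_k$-measurable constant times a product of $nm-1-k$ independent cuts. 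Hence $\mathbb{E}[\varphi_s(V_i)\varphi_s(V_j)]=\mathbb{E}\big[\mathbb{E}[\varphi_s(V_i)\mid\mathcal{F}_k]\,\mathbb{E}[\varphi_s(V_j)\mid\mathcal{F}_k]\big]$, and applying the part-(1) estimate with $nm-1-k$ cuts to each inner conditional expectation puts $\mathbb{E}[\varphi_s(V_i)\varphi_s(V_j)]$ within $\eta_{nm-1-k}$ of $c^2$, where $\eta_N\to0$ is the error bound from part (1) with $N$ cuts — namely, up to an absolute constant, $\sum_{\ell\ne0}\big(|\mathcal{M}[f](1-\tfrac{2\pi i\ell}{\log 10})|^N+|\mathcal{M}[f(1-\cdot)](1-\tfrac{2\pi i\ell}{\log 10})|^N\big)$, which is non-increasing in $N$ and vanishes in the limit along $N=nm$ by~\eqref{eq:MellinCondition}, with the trivial bound $1$ used for the few small $N$ where the series diverges. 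Since exactly a fraction $2^{-k-1}$ of ordered pairs have a depth-$k$ ancestor,
\begin{equation*}
\Big|\,\mathbb{E}\big[(\rho_m^{(n)}(s))^2\big]-c^2\,\Big|\ \le\ O\big(2^{-nm}\big)+\sum_{k=0}^{nm-1}2^{-k-1}\,\eta_{\,nm-1-k}\ =\ O\big(2^{-nm}\big)+2^{-nm}\sum_{N=0}^{nm-1}2^{N}\eta_N,
\end{equation*}
and the final sum is $o(1)$: given $\varepsilon>0$, pick $N_1$ with $\eta_N<\varepsilon$ for $N\ge N_1$; then $2^{-nm}\sum_{N<N_1}2^N\eta_N\to0$ while $2^{-nm}\sum_{N_1\le N<nm}2^N\eta_N\le\varepsilon$. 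Hence $\mathrm{Var}(\rho_m^{(n)}(s))\to0$, which with part (1) gives the asserted Benford behavior with high probability.

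I expect the crux to be part (2), not the Fourier computation as such: one must estimate $\mathbb{E}[\varphi_s(V_i)\varphi_s(V_j)]$ for all pairs at once, but when the least common ancestor lies near the leaves only a few cuts remain below it, the Mellin bound $\eta_{nm-1-k}$ is weak, and $\eta_N$ may even be $+\infty$ for small $N$ if $f$ is not smooth. What makes this harmless is that such pairs are exponentially rare — only a $2^{-k-1}$ fraction have a depth-$k$ ancestor — so bounding $\varphi_s(V_i)\varphi_s(V_j)$ trivially by $1$ there costs only $o(1)$; carrying this out cleanly (effectively choosing a cutoff depth as a function of $n$, and verifying the monotonicity in $N$ that upgrades~\eqref{eq:MellinCondition}, stated only along $N=nm$, to $\eta_N\to0$ for all $N$) is the real bookkeeping, everything else being routine convolution/Fourier estimation.
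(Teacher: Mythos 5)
You should note that this paper never proves Theorem \ref{thm:Irfan'sResult}: it is imported as a prior result of \cite{DM} (generalizing \cite{Beck}), so there is no in-paper proof to compare against. Your argument is correct and follows essentially the same strategy as those cited works: the Mellin condition \eqref{eq:MellinCondition} bounds the nonzero Fourier coefficients of $\log_{10}V_i$ modulo $1$ (giving the mean), and the variance is handled by conditioning on all cuts down to and including the least common ancestor of a pair of leaves, which makes the two volumes conditionally independent products of the remaining cuts --- precisely the ``early decisions have little effect on far-apart leaves'' mechanism the paper's remark alludes to --- with the exponentially rare close pairs (and the small-$N$ cases where the Mellin series may diverge) bounded trivially by $1$.
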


\begin{figure}[h]
\begin{center}
\scalebox{.7}{\includegraphics{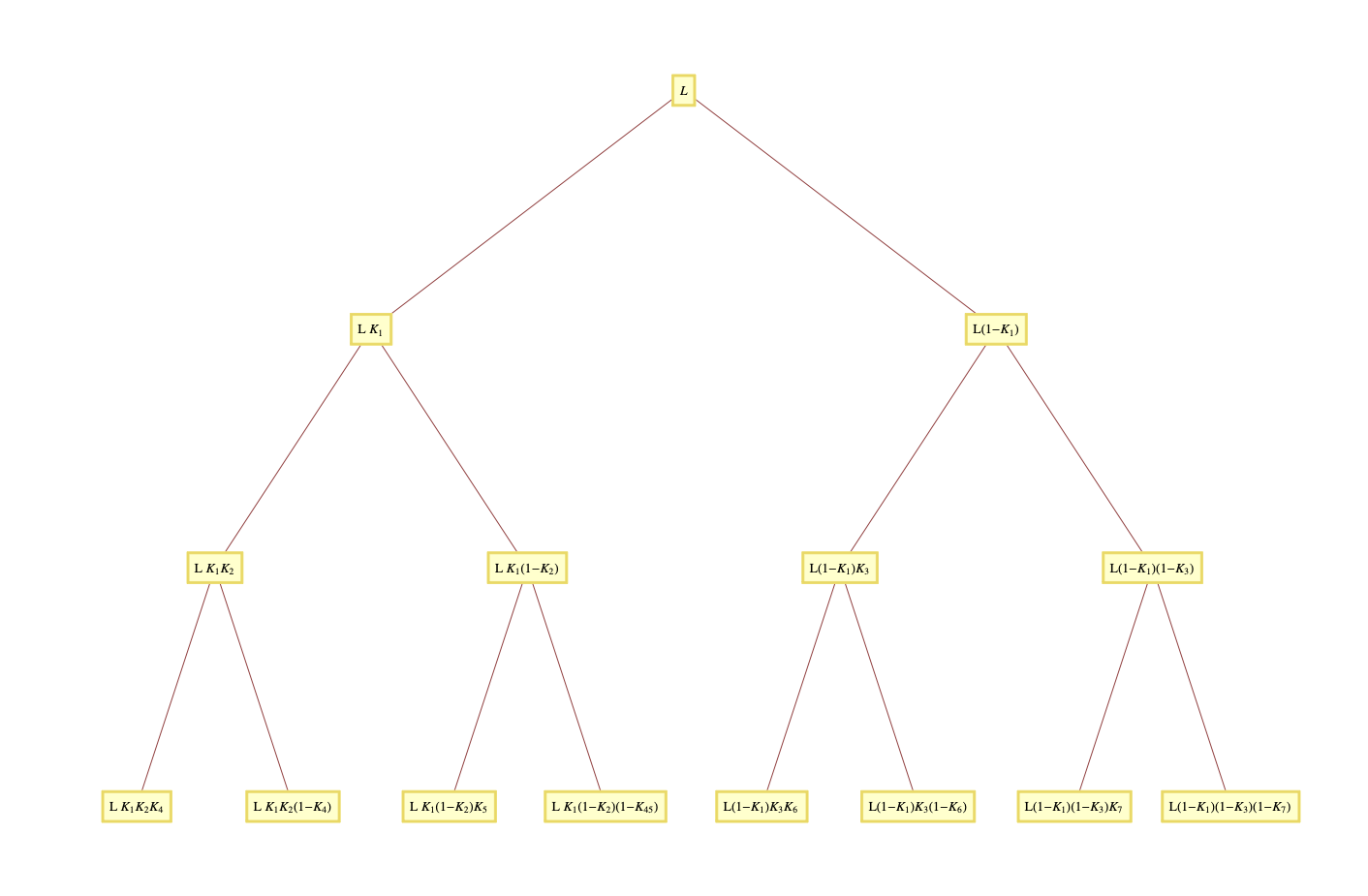}}
\caption{\label{fig:tree3levels} The side lengths of a one-dimensional branching-fragmentation  process for $n = 3$.}
\end{center}\end{figure}

\begin{figure}[h]
    \centering
    \scalebox{1.2}{\includegraphics{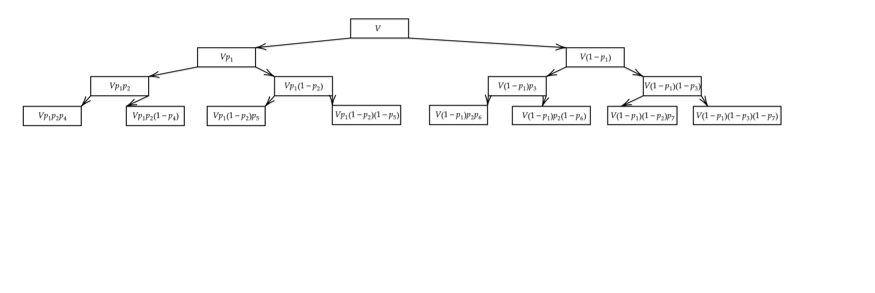}}
    \vspace{-40mm}
    \caption{The volumes of a three-dimensional branching-fragmentation process for $n = 1$.}
    \label{fig:3dfragmentationunrestricted}
\end{figure}

\begin{rek}
The exact fragmentation process used in Theorem \ref{thm:Irfan'sResult} features $2^{m \cdot n}$ boxes at time step $n$, all of which are concurrent sub-boxes of the original box. We say that this is a \textit{branching-fragmentation process}, as there are exponentially many boxes which naturally are the leaves of a height $n$ binary tree of all the boxes at all the time steps up to $n$.
Theorem \ref{thm:Irfan'sResult} proves strong concentration, i.e., that the variance goes to zero; morally this is because early decisions in the tree about where to cut have little effect on future boxes that are far apart leaves on the tree.
\end{rek}

The proof of Theorem \ref{thm:Irfan'sResult} suggests that one might observe Benford behavior in the perimeter, area, and other generalized volumes of lower-dimensional faces of boxes resulting from fragmentation.

\subsection{Results}

We prove results about linear-fragmentation processes, which we define as follows.

\begin{figure}[h]
\centering
{\includegraphics[width=15cm]{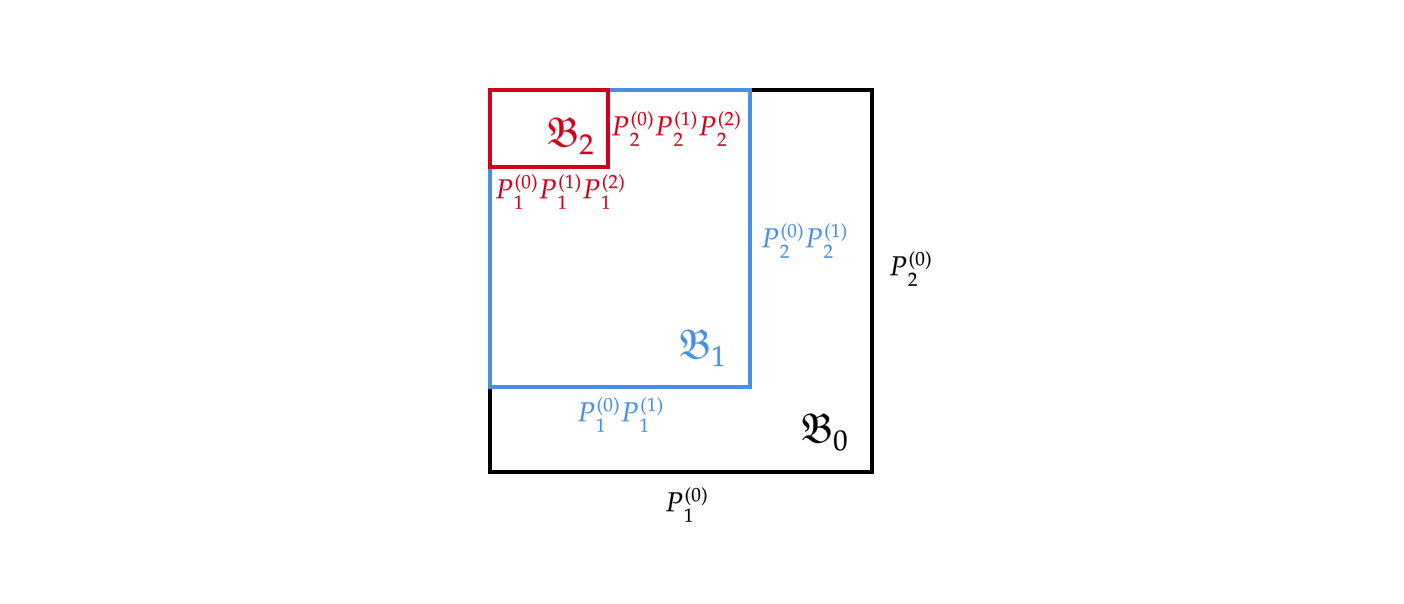}\vspace{-10mm}
    \caption{A linear fragmentation process for $n = 2$ on a two-dimensional box.}}
    \label{fig:linearfragmenatation}
\end{figure}

\begin{defi}[Box]
We say a set $\mathfrak{B} \subset \R^m$ is an \textit{$m$-dimensional box} if it is a set of the form $[a_1,b_1] \times \dotsi \times [a_m,b_m] \subset \R^m$, where $a_i<b_i$ are finite numbers.
\end{defi}

\begin{defi}[Linear-Fragmentation Process]
\label{def:LFP}
A \textit{linear-fragmentation process} is a sequence of random variables $\mathfrak{B}_0, \mathfrak{B}_1, \mathfrak{B}_2, \ldots$ such that the following hold.
\begin{enumerate}
    \item The random variables $\mathfrak{B}_i$ are $m$-dimensional boxes.
    \item The random variables $\mathfrak{B}_i$ form a descending chain $\mathfrak{B}_0 \supset \mathfrak{B}_1 \supset \mathfrak{B}_2 \supset \dotsi$.
    \item The distribution of $\mathfrak{B}_{n+1}$ conditioned on $\mathfrak{B}_n$ is some fixed distribution of independent proportion cuts $P_1,\ldots,P_m$ along each Cartesian axis. These $P_i$ are fixed over all $n\geq0$.
    \item The proportion cuts $P_i$ are continuous random variables with finite mean, variance, and third moment.
    \item\label{item:same mean and variance} We assume for simplicity of analysis that $\mathbb{E}[\log_B P_i]$ and ${\rm Var}[\log_B P_i]$ are constants $\mu_P\in\R$ and $\sigma_P^2 > 0$ that are uniform over $1 \leq i \leq m$.
\end{enumerate}
\end{defi}

The statistics we are interested in studying are the volumes of the frame random variables in a linear-fragmentation process.

\begin{defi}[$d$-Volume]
Given an $m$-dimensional box $\mathfrak{B}$ and a positive integer $d \leq m$, we say the \textit{$d$-volume} of $\mathfrak{B} = \prod_i [a_i,b_i]$ is the sum of the $d$-dimensional volumes of the $d$-dimensional faces of $\mathfrak{B}$. More precisely, we define
\begin{equation}
\label{def:dvolume}
\Vol_d(\mathfrak{B})
\ \coloneqq \
2^{m-d}
\sum_{|I| \ = \ d} \
\prod_{i \ \in \ I} (b_i-a_i) ,
\end{equation}
where we are summing over all subsets $I\subset\{1,\ldots,m\}$ with cardinality $d$.
\end{defi}

In Section \ref{sec:Redux2Max}, we prove the following theorem.

\begin{thm}[Maximum Criterion]
\label{thm:Redux2Max}
Let $\mathfrak{B} = \mathfrak{B}_0$ be a fixed $m$-dimensional box. 
Let $\mathfrak{B}_0 \supset \mathfrak{B}_1 \supset \dotsi$ be a a linear-fragmentation process whose proportion cuts $P_i$ have probability density functions $f_i:(0,1)\to(0,\infty)$.
Let
\begin{equation}
V_d^{(n)} \ \coloneqq \ \Vol_d(\mathfrak{B}_n)
\end{equation}
be the sequence of volumes obtained from this process. Let $\mathfrak{m}_d^{(n)}$ denote the maximum product of $d$ sides at each stage. If $\mathfrak{m}_d^{(n)}$ converges to strong Benford behavior, then so too does $V_d^{(n)}$ converge to strong Benford behavior as $n\to\infty$.
\end{thm}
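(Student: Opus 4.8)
The plan is to show that $V_d^{(n)}$ differs from a fixed constant multiple of $\mathfrak{m}_d^{(n)}$ only by a factor tending to $1$ in probability, and then to invoke the elementary fact that multiplying by a sequence of random variables converging in probability to a positive constant preserves convergence to strong Benford behavior. Write $\ell_i^{(n)}$ for the $i$-th side length of $\mathfrak{B}_n$, so $\ell_i^{(n)} = \ell_i^{(0)}\prod_{k=1}^n P_i^{(k)}$ where the $P_i^{(k)}$ are the independent proportion cuts, and set $S_I^{(n)} \coloneqq \log_B \prod_{i\in I}\ell_i^{(n)} = \sum_{i\in I}\log_B \ell_i^{(n)}$ for each $d$-subset $I$. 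Then $\mathfrak{m}_d^{(n)} = B^{S_{(1)}^{(n)}}$, where $S_{(1)}^{(n)} \ge S_{(2)}^{(n)} \ge \cdots$ are the order statistics of the $\binom{m}{d}$ numbers $\{S_I^{(n)}\}$, and from the definition of $\Vol_d$ we obtain $V_d^{(n)} = 2^{m-d}\,\mathfrak{m}_d^{(n)}\,(1 + R_n)$ with $R_n \coloneqq \sum_{I \neq I^*} B^{\,S_I^{(n)} - S_{(1)}^{(n)}} \ge 0$, where $I^*$ is the maximizing subset (a.s.\ unique, since the cuts are continuous and hence each $S_I^{(n)} - S_J^{(n)}$ has a density). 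So it suffices to prove $R_n \to 0$ in probability: then $V_d^{(n)} = \mathfrak{m}_d^{(n)} \cdot c_n$ with $c_n \coloneqq 2^{m-d}(1+R_n) \to 2^{m-d} > 0$ in probability. (The case $d=m$ is trivial, since then $V_m^{(n)} = \mathfrak{m}_m^{(n)}$ and $R_n=0$; so assume $d < m$, so $\binom{m}{d} \ge 2$.)

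For the ``multiply by a positive constant'' step, recall that $X_n$ converges to strong Benford behavior in base $B$ if and only if the fractional part $\{\log_B X_n\}$ converges in distribution to the uniform law on $[0,1)$ -- equivalently, the image of $\log_B X_n$ in the compact group $\R/\Z$ converges weakly to Haar measure. If $c_n \to c>0$ in probability then $\log_B c_n \to \log_B c$ in probability, and since $\log_B(X_n c_n) = \log_B X_n + \log_B c_n$ corresponds on $\R/\Z$ to translating the image of $\log_B X_n$ by $\log_B c_n$, Slutsky's theorem together with the continuity of translation and translation-invariance of Haar measure gives that the image of $\log_B(X_n c_n)$ still converges weakly to Haar measure. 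Hence $X_n c_n$ converges to strong Benford behavior. I would isolate this as a short lemma.

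It remains to prove $R_n \to 0$ in probability, and since $R_n \le \big(\binom{m}{d}-1\big)\,B^{\,S_{(2)}^{(n)} - S_{(1)}^{(n)}}$, it is enough to show that the gap $G_n \coloneqq S_{(1)}^{(n)} - S_{(2)}^{(n)}$ between the largest and second-largest $d$-fold log-products tends to $+\infty$ in probability. Here I would use the central limit theorem. By Definition~\ref{def:LFP} the vectors $\big(\log_B P_1^{(k)}, \dots, \log_B P_m^{(k)}\big)$, $k\ge1$, are i.i.d.\ with mean $(\mu_P,\dots,\mu_P)$ and covariance $\sigma_P^2 I_m$, so the multivariate CLT gives $\big(\log_B \ell_i^{(n)} - n\mu_P\big)_{i=1}^m / \sqrt{n} \Rightarrow (g_1,\dots,g_m) \sim N(0,\sigma_P^2 I_m)$; applying the continuous linear map $(x_i) \mapsto \big(\sum_{i\in I} x_i\big)_{|I|=d}$ yields $\big(S_I^{(n)} - dn\mu_P\big)_{|I|=d}/\sqrt{n} \Rightarrow (G_I)_{|I|=d}$ with $G_I = \sum_{i\in I} g_i$. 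The ``top gap'' functional $v \mapsto (\text{largest entry of }v) - (\text{second-largest entry of }v)$ is continuous on $\R^{\binom{m}{d}}$ and is unchanged by subtracting a common constant from all coordinates, so $G_n/\sqrt{n} \Rightarrow \Gamma$, the top gap of $(G_I)_{|I|=d}$. Because $\sigma_P^2 > 0$ and $I \neq J$ forces the indicator vectors of $I$ and $J$ to differ, each $G_I - G_J$ has variance $\big(|I\setminus J| + |J\setminus I|\big)\sigma_P^2 > 0$, so a.s.\ the $G_I$ are pairwise distinct and $\Gamma > 0$ a.s. Then for any $M>0$ and any $\varepsilon>0$ we have $\Prob(G_n \le M) = \Prob\big(G_n/\sqrt n \le M/\sqrt n\big) \le \Prob\big(G_n/\sqrt n \le \varepsilon\big)$ for $n$ large, so $\limsup_n \Prob(G_n \le M) \le \Prob(\Gamma \le \varepsilon)$ by the Portmanteau theorem, and letting $\varepsilon \downarrow 0$ gives $\limsup_n \Prob(G_n \le M) \le \Prob(\Gamma \le 0) = 0$. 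Hence $G_n \to \infty$ in probability, so $R_n \to 0$ in probability, and the theorem follows.

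The main obstacle is this last step: the maximizing subset $I^*$ is itself random, so one cannot simply track one fixed pair of products, and the honest route is the joint weak limit of all $\binom{m}{d}$ rescaled log-products followed by the continuous-mapping and Portmanteau argument above. The supporting observations -- that continuity of the cut densities kills ties among the $S_I^{(n)}$ with probability one, and that $\sigma_P^2>0$ makes the limiting Gaussian spread out enough for its top gap to be a.s.\ positive -- are easy but genuinely needed.
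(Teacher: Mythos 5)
Your proposal is correct, and it reaches the paper's conclusion by a genuinely different route, even though the guiding idea (the maximal $d$-fold product dominates the sum) is the same. You write $V_d^{(n)} = 2^{m-d}\mathfrak{m}_d^{(n)}(1+R_n)$ and show $R_n \to 0$ in probability via the joint multivariate CLT for all $\binom{m}{d}$ rescaled log-products, continuous mapping applied to the top-gap functional, and Portmanteau; then you finish with a clean lemma that multiplication by a factor converging in probability to a positive constant preserves strong Benford convergence, proved by viewing $\log_B$ modulo $1$ on $\R/\Z$ and using Slutsky plus translation invariance of Haar measure. The paper instead proves a quantitative ``Wafer Lemma'' with Berry--Esseen, giving the explicit rate $1 - O(-\log\delta_n/\sqrt{n})$, and then handles the significand directly on $[1,B)$ by conditioning on the wafer event intersected with the no-overflow event $(1+\delta_n)S_B(\mathfrak{m}_d^{(n)})<B$, followed by inclusion--exclusion and a squeeze argument that uses the uniform convergence of the assumed Benford limit. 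What your approach buys: it needs only a finite second moment of $\log_B P_i$ (no third-moment/Berry--Esseen input), it dispenses entirely with the overflow bookkeeping because Haar translation invariance absorbs the multiplicative perturbation, and by tracking the joint limit of all $\binom{m}{d}$ products it avoids the paper's reduction of the $d>1$ case to $d=1$ (which, as you implicitly note, is delicate since a large gap among single sides does not by itself transfer the same gap to all pairs of $d$-fold products). What the paper's approach buys is quantitative control: an explicit error rate in $n$ and $\delta_n$ for the wafer event, which your purely qualitative convergence-in-probability argument does not provide. Minor points to make explicit if you write this up: the Slutsky/continuous-mapping step should be stated for random elements of the compact group $\R/\Z$ (the addition map $\R/\Z\times\R\to\R/\Z$ is continuous, and joint convergence holds because the second factor tends to a constant, independence not being needed), and the equivalence of the paper's definition of strong Benford with weak convergence of $\{\log_B X_n\}$ to the uniform law is justified by the continuity of the limiting CDF.
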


\begin{rek}
Condition (\ref{item:same mean and variance}) for a linear-fragmentation process can be dropped with more work. The idea is that, by the law of large numbers, one expects the significand of our volumes to be largely influenced by the sides whose proportion cuts have the largest mean; therefore we have a reduction to the case of equal means. Having the same mean and different variances, there is little quantitative difference in our analysis, but for sake of notation it is much clearer to assume that all variances are the same.
\end{rek}

When $d = m$ there is only one choice of product, and therefore the maximum criterion is automatically satisfied by a large class of continuous proportion distributions, namely all such distributions $P_i$ for which repeated independent multiplications by $X = P_1 \dotsi P_m$ converges to strong Benford behavior. Note that this gives us a result analogous to those of \cite{Beck} and \cite{DM} for the linear-fragmentation process. Therefore, Theorem \ref{thm:Redux2Max} implies the following corollary.

\begin{cor}[Benfordness of the $m$-Volumes of a Linear-Fragmentation Process]
\label{cor:Benfordness of m-Volume Linear-Fragmentation}
Let $\mathfrak{B} = \mathfrak{B}_0$ be a fixed $m$-dimensional box. 
Let $\mathfrak{B}_0 \supset \mathfrak{B}_1 \supset \dotsi$ be a a linear-fragmentation process. Then the sequences of box volumes $\Vol_m(\mathfrak{B}_n)$ converges to strong Benford behavior.
\end{cor}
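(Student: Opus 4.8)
The plan is to deduce the corollary from the Maximum Criterion (Theorem \ref{thm:Redux2Max}) by checking its hypothesis when $d=m$. Since there is exactly one subset $I\subset\{1,\ldots,m\}$ of cardinality $m$, the maximum product of $m$ sides $\mathfrak{m}_m^{(n)}$ is literally $\Vol_m(\mathfrak{B}_n)$, so it suffices to prove that $\Vol_m(\mathfrak{B}_n)$ converges to strong Benford behavior. First I would unwind Definition \ref{def:LFP}: along the $i$-th axis the side length of $\mathfrak{B}_n$ equals the $i$-th side of $\mathfrak{B}_0$ times $\prod_{j=1}^n P_i^{(j)}$, where $P_i^{(j)}$ is the proportion cut along axis $i$ at step $j$. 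Multiplying over $i$ gives
\begin{equation}
\Vol_m(\mathfrak{B}_n) \ = \ \Vol_m(\mathfrak{B}_0)\,\prod_{j=1}^n\prod_{i=1}^m P_i^{(j)},
\end{equation}
and by parts (3) and (4) of Definition \ref{def:LFP} the $nm$ variables $P_i^{(j)}$ are mutually independent, and each $P_i^{(j)}$ is an (absolutely) continuous random variable on $(0,1)$.

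Next I would pass to logarithms to reduce to an equidistribution statement: strong Benford behavior of $\Vol_m(\mathfrak{B}_n)$ is equivalent to the fractional parts $\{\log_B\Vol_m(\mathfrak{B}_n)\}$ converging in distribution, on the circle $\R/\Z$, to the uniform measure. By Weyl's equidistribution criterion --- a sequence of probability measures on $\R/\Z$ converges weakly to the uniform measure as soon as each of their nonzero Fourier coefficients tends to $0$ --- this follows once
\begin{equation}
\mathbb{E}\!\left[e^{2\pi i\ell\log_B\Vol_m(\mathfrak{B}_n)}\right]\ \longrightarrow\ 0 \qquad(n\to\infty)
\end{equation}
for every integer $\ell\neq0$. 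By independence this expectation factors as $e^{2\pi i\ell\log_B\Vol_m(\mathfrak{B}_0)}\prod_{j=1}^n\prod_{i=1}^m\phi_i(\ell)$, where $\phi_i(\ell):=\mathbb{E}\big[e^{2\pi i\ell\log_B P_i}\big]$ is the characteristic function of $\log_B P_i$ evaluated at $2\pi\ell$; note this is always well defined, with no moment hypotheses needed, since the integrand is bounded.

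The crux is the strict bound $|\phi_i(\ell)|<1$ for $\ell\neq0$. Indeed $|\phi_i(\ell)|=1$ is the equality case in $|\mathbb{E}[Z]|\le\mathbb{E}|Z|$ for the unit-modulus variable $Z=e^{2\pi i\ell\log_B P_i}$, which forces $Z$ to be almost surely constant, i.e., $\log_B P_i$ to be supported on a single coset of $\tfrac{1}{\ell}\Z$, hence on a countable set; this contradicts the absolute continuity of $P_i$. Since there are finitely many axes, $r_\ell:=\max_{1\le i\le m}|\phi_i(\ell)|<1$, so the factored expectation has modulus at most $r_\ell^{\,nm}\to0$. This yields the required vanishing of Fourier coefficients, so $\{\log_B\Vol_m(\mathfrak{B}_n)\}$ converges weakly to the uniform distribution; since the limiting cumulative distribution function is continuous, weak convergence upgrades to pointwise, and then --- by the compactness remark following the definition of strong Benford's law --- to uniform convergence of $\Prob(S_B(\Vol_m(\mathfrak{B}_n))\le D)$ to $\log_B D$. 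Thus $\mathfrak{m}_m^{(n)}$ converges to strong Benford behavior, and Theorem \ref{thm:Redux2Max} gives the corollary.

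I expect the only genuinely delicate points to be the justification of $|\phi_i(\ell)|<1$ (the lattice-support argument above) and the standard but not-quite-automatic passage from ``all nonzero Fourier coefficients vanish'' to ``the cumulative distribution functions converge uniformly''; everything else is bookkeeping. It is worth noting the contrast with Theorem \ref{thm:Irfan'sResult}: the Mellin/Fourier summability hypothesis \eqref{eq:MellinCondition} is not needed here, because $\Vol_m(\mathfrak{B}_n)$ is a single product of independent continuous cuts rather than an average of exponentially many strongly dependent volumes, so there is no variance term to control and pointwise-in-$\ell$ decay of the Fourier coefficients suffices.
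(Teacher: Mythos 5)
Your argument is correct, and it diverges from the paper at the one step where the paper is terse. The paper handles this corollary exactly as you begin: when $d=m$ there is a single index set, so $\mathfrak{m}_m^{(n)}=v_m^{(n)}$ and the maximum criterion of Theorem \ref{thm:Redux2Max} is vacuously the statement to be proved; the paper then disposes of that statement in one line by saying one ``may appeal to the Central Limit Theorem'' for the product $P_1\dotsi P_m$ iterated independently. You instead prove the Benfordness of the product directly: write $\log_B \Vol_m(\mathfrak{B}_n)$ as a sum of $nm$ independent atomless random variables, observe that $|\mathbb{E}[e^{2\pi i\ell\log_B P_i}]|<1$ for $\ell\neq 0$ (the equality case would force $\log_B P_i$ onto a coset of $\frac{1}{\ell}\Z$, impossible for a continuous distribution), conclude geometric decay of every nonzero Fourier coefficient of the law of $\{\log_B \Vol_m(\mathfrak{B}_n)\}$, and finish with the Weyl/L\'evy continuity criterion on $\R/\Z$ plus continuity of the limiting CDF. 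This buys you a self-contained proof with minimal hypotheses (atomlessness alone; no moment conditions) and makes precise a point the paper elides: the CLT by itself gives distributional normality of the normalized logarithm, but passing from that to equidistribution mod $1$ of the unnormalized sum requires an extra ingredient --- either a quantitative normal approximation with spreading variance in the spirit of the paper's Lemma \ref{lem:WaferLemma} and Lemma \ref{lem:GoodEstimate}, or a Fourier-decay bound like yours (this is essentially the modulo-$1$ CLT of the cited work of Miller--Nigrini). The paper's phrasing buys brevity and consistency with the Berry--Esseen machinery used elsewhere, while your route is the cleaner rigorous justification; your closing remark about why no Mellin-type summability over $\ell$ is needed here, in contrast with Theorem \ref{thm:Irfan'sResult}, is also accurate, since only pointwise-in-$\ell$ decay is required for a single volume rather than an empirical fraction over exponentially many dependent volumes.
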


Indeed, one may appeal to the Central Limit Theorem in order to immediately see that the independent products of such $P_i$ satisfy the strong version of Benford's law.

In Section \ref{sec:ExampleFamily}, we produce an example family of distributions which satisfy the maximum criterion for $d=1$, namely those for which $\log_B P_i$ are uniformly distributed. We prove the following theorem.

\begin{thm}[Example of the Maximum Criterion being Satisfied]
\label{thm:SpecialCaseMaxSideLength}
Let $P_i^{(j)}$ be IID log-uniform distributions. In the case of $d=1$, i.e., perimeter, the maximum side-lengths
\begin{equation}
\mathfrak{m}_1^{(n)} \ \coloneqq \ 
\max_{1 \leq i \leq m} \ P_i^{(1)} \dotsi P_i^{(n)}
\end{equation}
converge to Strong Benford behavior as $n\to\infty$.
\end{thm}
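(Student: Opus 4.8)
The plan is to take logarithms and reduce everything to a single equidistribution statement. Write $Y_i^{(n)} := \log_B\!\big(P_i^{(1)}\dotsi P_i^{(n)}\big) = \sum_{j=1}^n \log_B P_i^{(j)}$; by log-uniformity each such sum is a sum of $n$ i.i.d.\ uniform random variables, with mean $n\mu_P$ and variance $n\sigma_P^2$ in the notation of Definition~\ref{def:LFP}. Set $Z_n := \log_B \mathfrak m_1^{(n)} = \max_{1\le i\le m} Y_i^{(n)}$. Because the significand satisfies $S_B\!\big(\mathfrak m_1^{(n)}\big) = B^{\{Z_n\}}$, where $\{\,\cdot\,\}$ is the fractional part, the statement ``$\mathfrak m_1^{(n)}$ converges to strong Benford behavior'' is exactly the statement that $\{Z_n\}$ converges weakly to the uniform law on $[0,1)$; by Weyl's criterion this is in turn equivalent to
\[
\mathbb E\big[e^{2\pi i \ell Z_n}\big] \;\longrightarrow\; 0 \qquad \text{for every } \ell \in \mathbb Z\setminus\{0\},
\]
and the resulting pointwise convergence of the significand c.d.f.\ upgrades to uniform convergence by compactness, exactly as noted after the definition of strong Benford behavior. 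So the whole theorem reduces to this one Fourier estimate.

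The obstacle is that the maximum does not commute with reduction mod~$1$: even random variables that are \emph{exactly} Benford can fail to have a Benford maximum, so one must genuinely exploit the spreading that occurs as $n\to\infty$. I would standardize, writing $\tilde Y_i^{(n)} := (Y_i^{(n)} - n\mu_P)/(\sigma_P\sqrt n)$ and $\tilde Z_n := (Z_n - n\mu_P)/(\sigma_P\sqrt n) = \max_i \tilde Y_i^{(n)}$, so that $\mathbb E[e^{2\pi i\ell Z_n}] = e^{2\pi i\ell n\mu_P}\,\mathbb E[e^{2\pi i\ell \sigma_P\sqrt n\,\tilde Z_n}]$. The classical CLT gives $\tilde Z_n \Rightarrow M_m := \max(N_1,\dots,N_m)$ for i.i.d.\ standard normals $N_j$, and $M_m$ has the rapidly decaying density $\eta := m\,\Phi^{m-1}\phi$. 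The trouble is that we need the characteristic function of $\tilde Z_n$ at the \emph{growing} argument $2\pi\ell\sigma_P\sqrt n\to\infty$, whereas weak convergence controls characteristic functions only uniformly on compacta. The fix, and the technical heart of the argument, is a local limit theorem: since $\log_B P$ has a bounded density, Gnedenko's local CLT gives $\|\tilde g_n-\phi\|_\infty\to 0$ for the density $\tilde g_n$ of $\tilde Y_i^{(n)}$, hence also $\|\tilde g_n-\phi\|_{L^1}\to 0$ (the second moments stay bounded); as $\tilde G_n\to\Phi$ uniformly as well, the density $\tilde h_n = m\,\tilde G_n^{\,m-1}\tilde g_n$ of $\tilde Z_n$ converges in $L^1$ to $\eta$ (write $\tilde h_n-\eta = m\tilde G_n^{m-1}(\tilde g_n-\phi)+m(\tilde G_n^{m-1}-\Phi^{m-1})\phi$ and apply dominated convergence to the second term).

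Once the densities converge in $L^1$, the characteristic functions converge \emph{uniformly} on all of $\mathbb R$, so $\big|\mathbb E[e^{2\pi i\ell\sigma_P\sqrt n\tilde Z_n}] - \widehat\eta(\ell\sigma_P\sqrt n)\big| \le \|\tilde h_n-\eta\|_{L^1}\to 0$, where $\widehat\eta$ is the Fourier transform of $\eta$; and since $\eta\in L^1$, Riemann--Lebesgue gives $\widehat\eta(\ell\sigma_P\sqrt n)\to 0$ for each fixed $\ell\neq 0$. Combining these proves $\mathbb E[e^{2\pi i\ell Z_n}]\to 0$, which finishes the proof. I expect the local limit theorem step to be the only real obstacle; everything else is bookkeeping. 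Two remarks worth recording: the only properties of log-uniformity actually used are that the summands $\log_B P_i^{(j)}$ are i.i.d.\ with a bounded density and that hypothesis~\ref{item:same mean and variance} of Definition~\ref{def:LFP} is then automatic, so the same proof delivers the $d=1$ maximum criterion for any linear-fragmentation process with bounded proportion densities; and one can sidestep Gnedenko entirely by estimating the explicit piecewise-polynomial Irwin--Hall density of $Y_i^{(n)}$ directly, where log-concavity forces $\|\tilde h_n''\|_{L^1}=o(n)$ and two integrations by parts give $\big|\mathbb E[e^{2\pi i\ell\sigma_P\sqrt n\tilde Z_n}]\big| = O_\ell(n^{-1/2})$.
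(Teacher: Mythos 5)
Your argument is correct, and while it shares the paper's skeleton --- normalize $\log_B$ of the side products, write the density of the normalized maximum as $m\,G_n^{m-1}g_n$ via order statistics, and approximate it by $m\,\Phi^{m-1}\varphi$ --- it differs in both technical inputs and endgame. Where the paper proves a quantitative local limit theorem by hand (Lemma \ref{lem:GoodEstimate}: explicit Fourier analysis of the $n$-th power of the sinc characteristic function, split into bulk, middle, and tail frequencies, giving $f_n=\varphi+O(n^{-1+4\varepsilon})$), you cite Gnedenko's local CLT for bounded densities and upgrade sup-norm convergence to $L^1$ convergence by truncation/Scheff\'e; and where the paper finishes by integrating $g_n$ over the periodic set $E_n=(a/\sqrt n,b/\sqrt n)+\Z/\sqrt n$ and justifying the limit $(b-a)$ by an informal Riemann-sum argument, you finish by Weyl's criterion: the $L^1$ closeness of $\tilde h_n$ to $\eta=m\Phi^{m-1}\phi$ controls characteristic functions uniformly in the frequency, and Riemann--Lebesgue kills $\widehat\eta$ at the growing argument $\sim\ell\sigma_P\sqrt n$. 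Your route buys generality (it works verbatim for any proportion cuts whose logarithm has a bounded density, as you note) and makes the final equidistribution step arguably more airtight than the paper's Riemann-sum heuristic, at the cost of losing the explicit error rates the paper's computation provides. One small caveat: your closing remark about sidestepping Gnedenko via the Irwin--Hall density is only a sketch --- the claimed bound $\lVert \tilde h_n''\rVert_{L^1}=o(n)$ and the stated rate $O_\ell(n^{-1/2})$ are not justified as written (two integrations by parts with $t\asymp\sqrt n$ would want a bound on $\lVert\tilde h_n''\rVert_{L^1}$ that you have not established) --- but this is an aside and does not affect the main proof.
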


In view of Theorem \ref{thm:Redux2Max}, this gives an example of Benford behavior for lower dimensional volumes of a box fragmentation process.

\begin{cor}
\label{cor:BenfordPerimeter}
Let $\mathfrak{B} = \mathfrak{B}_0$ be a fixed $m$-dimensional box. Let $\mathfrak{B}_0 \supset \mathfrak{B}_1 \supset \dots$ be a linear-fragmentation process whose proportion cuts $P_i$ are identically log-uniform. Then the sequence of frame perimeters $\Vol_1(\mathfrak{B}_n)$ converges to Strong Benford behavior as $n\to\infty$.
\end{cor}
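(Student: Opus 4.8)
The plan is to obtain Corollary~\ref{cor:BenfordPerimeter} by composing the two main theorems: Theorem~\ref{thm:SpecialCaseMaxSideLength} establishes that for IID log-uniform proportion cuts the maximum product of one side converges to strong Benford behavior, and Theorem~\ref{thm:Redux2Max}, specialized to $d=1$, transfers this conclusion to the frame perimeter $\Vol_1(\mathfrak{B}_n)$. So the real task is to verify that the hypotheses of Theorem~\ref{thm:Redux2Max} hold. First I would check that a linear-fragmentation process with identically log-uniform cuts satisfies Definition~\ref{def:LFP}: the log-uniform law is continuous with finite first three moments, and by assumption $\mu_P$ and $\sigma_P^2>0$ are common to all axes. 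Since the cuts used at distinct stages and along distinct axes are independent copies of this fixed law, the family $\{P_i^{(j)}\}$ is exactly the IID log-uniform collection to which Theorem~\ref{thm:SpecialCaseMaxSideLength} applies.

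The one genuine point of care --- and the only place where work beyond quoting the two theorems is needed --- is that Theorem~\ref{thm:SpecialCaseMaxSideLength} controls $\max_i P_i^{(1)}\cdots P_i^{(n)}$, whereas the maximum product of one side in the actual process is $\mathfrak{m}_1^{(n)} = \max_i (b_i-a_i)\,P_i^{(1)}\cdots P_i^{(n)}$, carrying the fixed side lengths of $\mathfrak{B}_0$. I would argue as follows. Writing $L_i^{(n)} = \log_B\big(P_i^{(1)}\cdots P_i^{(n)}\big)$, each $L_i^{(n)}$ is a sum of $n$ IID terms, so its fluctuations are of order $\sqrt n$; since the shifts $\log_B(b_i-a_i)$ are $O(1)$, the Central Limit Theorem gives that the index achieving $\max_i\big(\log_B(b_i-a_i)+L_i^{(n)}\big)$ coincides with the index achieving $\max_i L_i^{(n)}$ with probability tending to $1$. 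On that event $\log_B \mathfrak{m}_1^{(n)} = \log_B(b_{I_n}-a_{I_n}) + \log_B\big(\max_i P_i^{(1)}\cdots P_i^{(n)}\big)$, where $I_n$ is the common argmax. For IID continuous random variables the identity of the argmax is independent of the value of the maximum, so conditioning on $I_n$ and invoking Theorem~\ref{thm:SpecialCaseMaxSideLength} together with the rotation-invariance of Lebesgue measure on the mantissa circle $\R/\Z$ (under $x\mapsto \log_B x \bmod 1$), the extra additive constant $\log_B(b_{I_n}-a_{I_n})$ does not disturb the limiting uniformity. Hence $\mathfrak{m}_1^{(n)}$ itself converges to strong Benford behavior.

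Finally I would apply Theorem~\ref{thm:Redux2Max} with $d=1$: its hypothesis, that $\mathfrak{m}_1^{(n)}$ converges to strong Benford behavior, has now been verified, so the conclusion that $V_1^{(n)} = \Vol_1(\mathfrak{B}_n)$ converges to strong Benford behavior follows, which is precisely the statement of the corollary. I expect no serious difficulty here --- the substantive analysis was already carried out in proving the Maximum Criterion and the log-uniform example --- with the minor exception of the argmax and constant-shift bookkeeping described in the previous paragraph, which is the only real obstacle and is routine.
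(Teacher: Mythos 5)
Your proposal is correct and takes essentially the same route as the paper, which proves the corollary simply by combining Theorem~\ref{thm:Redux2Max} (with $d=1$) and Theorem~\ref{thm:SpecialCaseMaxSideLength}. Your extra argmax/constant-shift bookkeeping for the initial side lengths $P_i^{(0)}=b_i-a_i$ (absent from the maximum in Theorem~\ref{thm:SpecialCaseMaxSideLength} but present in $\mathfrak{m}_1^{(n)}$ as defined in Section~\ref{sec:Redux2Max}) is sound and in fact addresses a detail the paper only waves at through its normalization remark in Section~\ref{sec:ExampleFamily}.
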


\section{Reduction to the Maximum-Volume Face}
\label{sec:Redux2Max}

In this section, the following notation is fixed. We work under the assumptions of Definition \ref{def:LFP}. \vspace{5pt}\\
$\bullet$\quad $B$: a fixed base in $[1,\infty)$. \\
$\bullet$\quad $m$: the dimension of the boxes $\mathfrak{B} = \mathfrak{B}_0 \supset \mathfrak{B}_1 \supset \dotsi$. \\
$\bullet$\quad $d$: the dimension of the frames we are considering. \\
$\bullet$\quad $P_1^{(0)},\ldots,P_m^{(0)}$: the initial side lengths (i.e., $b_i-a_i$) of $\mathfrak{B}_0$. \\
$\bullet$\quad $P_1^{(n)},\ldots,P_m^{(n)}$, $n\geq 1$: the $m$ proportions drawn at the $n$th iteration. \\
$\bullet$\quad $S_i^{(n)}
\ \coloneqq \
\prod_{t=0}^n P_i^{(t)}$: the side lengths of $\mathfrak{B}_n$. \\
$\bullet$\quad $I,J$: dummy indexing sets ranging over subsets of $\{1,\ldots,m\}$ with cardinality $d$. \\
$\bullet$\quad $P_I^{(t)} 
\ \coloneqq \
\prod_{i\in I} P_i^{(t)}$. \\
$\bullet$\quad $v_d^{(n)} 
\ \coloneqq \
2^{d-m} V_d^{(n)} = \sum_{I} \prod_{i\in I} S_i^{(n)} = \sum_{I} \mathfrak{p}_I^{(n)}$: the $d$-volume without the constant $2^{m-d}$. \\
$\bullet$\quad $\mathfrak{p}_I^{(n)}
\ \coloneqq \
\prod_{i\in I} S_i^{(n)} = \prod_{t = 0}^n P_I^{(t)}$: the product of the sides in $I$. \\
$\bullet$\quad $\mathfrak{m}_d^{(n)}
\ \coloneqq \
\max_{I} \mathfrak{p}_I^{(n)}$: the maximum product of $d$ sides.
\vspace{5pt}

It suffices to show that the random variables $v_d^{(n)}$ converge to strong Benford behavior, because $v_d^{(n)}$ and $V_d^{(n)}$ only differ by a fixed multiplicative constant of $2^{m-d}$. Indeed, if $X$ is Benford, so is $cX$ for any fixed $c > 0$.
Moreover, what we like is to control such a sum of products $\sum_I \mathfrak{p}_I^{(n)}$ over $|I|=d$ by using the observation that the maximum product $\mathfrak{m}_d^{(n)}$ should typically be many orders of magnitude larger than the other products. We quantify this statement in the form of Lemma \ref{lem:WaferLemma}, which is the tool that allows us to control the strong Benford behavior of our sum of random variables, allowing us to if one ascertains that the strong Benfordness of the maximum is suitable. In rare instances, such as $\mathfrak{m}_d^{(n)} = (B-\varepsilon) \cdot B^k$ where $\varepsilon>0$ is small, the Benfordness of $\mathfrak{m}_d^{(n)} = \mathfrak{p}_{\mathcal{I}}^{(n)}$ for some $|\mathcal{I}|=d$ does not translate well to the Benfordness of $\mathfrak{m}_d^{(n)} + \sum_{J\neq\mathcal{I}} \mathfrak{p}_{\mathcal{I}}^{(n)}$, since there is an overflow of the digits base $B$ which tampers with the distribution of the significand greatly. We handle these events, showing they almost always never occur (i.e., with probability tending towards $0$) in a standard way (cf. \S9.3.2 of \cite{MT-B}).

We first require a lemma.

\begin{lem}[Wafer Lemma]
\label{lem:WaferLemma}
Let $0<\delta_n<1$ be a decreasing sequence. Then the probability that $v_d^{(n)}$ is at most $(1+\delta_n)$ times $\mathfrak{m}_d^{(n)}$ is
\begin{equation}
\label{eq:WaferLemma}
\Prob\left(\mathfrak{m}_d^{(n)} \leq v_d^{(n)} \leq (1+\delta_n)\mathfrak{m}_d^{(n)}\right) \ = \
1 - O\left(\frac{-\log\delta_n}{\sqrt{n}}\right) ,
\end{equation}
where the implied constant depends on the distribution of $Y_j^{(t)}$ and $m$. We say that such an event at time $n$ is a $\delta_n$-Wafer.
\end{lem}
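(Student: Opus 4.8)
The plan is to handle the lower bound trivially and reduce the upper bound to an anti-concentration estimate for a finite family of random walks. The inequality $\mathfrak{m}_d^{(n)} \le v_d^{(n)}$ always holds since $\mathfrak{m}_d^{(n)} = \max_I \mathfrak{p}_I^{(n)}$ is one of the positive summands of $v_d^{(n)} = \sum_I \mathfrak{p}_I^{(n)}$; the content is to bound the probability of the bad event $F := \{\, v_d^{(n)} > (1+\delta_n)\mathfrak{m}_d^{(n)}\,\}$. (If $m=d$ there is a single summand and $F=\varnothing$, so assume $m>d$, whence $\binom{m}{d}-1\ge 1$.) On $F$, choose a maximizer $\mathcal{J}$ (lexicographically first, say), so that $\sum_{I\neq\mathcal{J}}\mathfrak{p}_I^{(n)} > \delta_n\,\mathfrak{p}_{\mathcal{J}}^{(n)}$; this sum has $\binom{m}{d}-1$ terms, so by pigeonhole some $\mathcal{I}\neq\mathcal{J}$ satisfies $\frac{\delta_n}{\binom{m}{d}-1}\,\mathfrak{p}_{\mathcal{J}}^{(n)} < \mathfrak{p}_{\mathcal{I}}^{(n)} \le \mathfrak{p}_{\mathcal{J}}^{(n)}$ (the upper bound because $\mathcal{J}$ is a maximizer). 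Setting $W_{IJ}^{(n)} := \log_B\mathfrak{p}_J^{(n)} - \log_B\mathfrak{p}_I^{(n)}$, this forces $W_{\mathcal{I}\mathcal{J}}^{(n)}\in[0,L_n)$ with $L_n := \log_B\!\big(\binom{m}{d}-1\big) - \log_B\delta_n = \tfrac{-\log\delta_n}{\log B} + O_{m,d}(1)$. Hence $F\subseteq\bigcup_{(I,J)}\{W_{IJ}^{(n)}\in[0,L_n)\}$, a union over the at most $\binom{m}{d}^2$ ordered pairs of distinct $d$-subsets, and a union bound reduces the lemma to the one-pair estimate $\Prob\big(W_{IJ}^{(n)}\in[0,L_n)\big) = O\!\big((1-\log\delta_n)/\sqrt{n}\big)$.

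Next I would recognize each $W_{IJ}^{(n)}$ as a centered random walk. Since $\mathfrak{p}_I^{(n)} = \prod_{t=0}^n\prod_{i\in I}P_i^{(t)}$, taking $\log_B$ and cancelling the indices in $I\cap J$ gives $W_{IJ}^{(n)} = c_{IJ} + \sum_{t=1}^n Z_{IJ}^{(t)}$, where $c_{IJ}$ is the deterministic contribution of the fixed initial side lengths $P_i^{(0)} = b_i-a_i$, and $Z_{IJ}^{(t)} := \sum_{j\in J\setminus I}\log_B P_j^{(t)} - \sum_{i\in I\setminus J}\log_B P_i^{(t)}$ are i.i.d.\ over $t\ge 1$ by Definition~\ref{def:LFP}(3). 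Writing $k := |I\setminus J| = |J\setminus I| \ge 1$, Definition~\ref{def:LFP}(\ref{item:same mean and variance}) makes each $Z_{IJ}^{(t)}$ have mean $0$ and variance $2k\sigma_P^2 > 0$, and the third-moment hypothesis of Definition~\ref{def:LFP} makes its third absolute moment finite.

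Finally I would apply the Berry--Esseen theorem to $\big(\sum_{t=1}^n Z_{IJ}^{(t)}\big)/(\sigma_{IJ}\sqrt{n})$: its distribution function is uniformly within $O(1/\sqrt{n})$ of the standard normal's, and the standard normal assigns mass at most $(\text{length})/\sqrt{2\pi}$ to any interval. Since $\{W_{IJ}^{(n)}\in[0,L_n)\}$ pulls back to an interval of length $L_n/(\sigma_{IJ}\sqrt{n})$ for the normalized walk, this yields $\Prob(W_{IJ}^{(n)}\in[0,L_n)) \le (a_1 L_n + a_2)/\sqrt{n}$ with $a_1,a_2$ depending only on $m$, $d$, and the common mean/variance/third-moment data of the $\log_B P_i$. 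Summing over the $\le\binom{m}{d}^2$ pairs and substituting $L_n = -\log\delta_n/\log B + O_{m,d}(1)$ gives $\Prob(F) = O\!\big((1-\log\delta_n)/\sqrt{n}\big)$, i.e.\ the asserted $1 - O\!\big((-\log\delta_n)/\sqrt{n}\big)$ in the regime $\delta_n\to 0$ of interest, the additive constant being harmlessly absorbed. I expect the only step needing real care to be this anti-concentration bound: a mere second-moment (Chebyshev) estimate only shows $W_{IJ}^{(n)}$ concentrates on a window of width $\asymp\sqrt{n}$ and says nothing about it \emph{avoiding} the short window $[0,L_n)$, so a Gaussian-comparison (smoothing) estimate is essential, and Berry--Esseen is the natural tool precisely because it consumes exactly the third moment the hypotheses provide; alternatively one could exploit absolute continuity of $W_{IJ}^{(n)}$ and bound its density, but a density bound uniform in $n$ amounts to a local central limit theorem, which is more than is required.
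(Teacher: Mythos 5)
Your proposal is correct, and it reaches the estimate by a genuinely different route than the paper. The paper works with the \emph{good} event directly: it lower-bounds it by the event that some single product $\mathfrak{p}_{\mathcal I}^{(n)}$ exceeds every other product by $\alpha_n=-\log(\delta_n/\binom{m}{d})$ in the logarithm, asserts that this event is implied by the corresponding $d=1$ event (one side dominating all other sides by $\alpha_n$), and then evaluates that one-dimensional probability via the order-statistics integral $\sum_i\int f_i^{(n)}(s)\prod_{j\neq i}F_j^{(n)}(s-\alpha_n)\,ds$, applying Berry--Esseen to each $\log S_j^{(n)}$ and integrating by parts to get $1-O(\alpha_n/\sqrt n)$. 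You instead upper-bound the complementary \emph{bad} event: pigeonhole produces a pair $\mathcal I\neq\mathcal J$ with $\log_B\mathfrak{p}_{\mathcal J}^{(n)}-\log_B\mathfrak{p}_{\mathcal I}^{(n)}\in[0,L_n)$, a union bound over the $O_{m,d}(1)$ ordered pairs reduces the lemma to a pairwise anti-concentration estimate, and Berry--Esseen applied to the centered walk $W_{IJ}^{(n)}$ gives the $O((1-\log\delta_n)/\sqrt n)$ bound per pair; the same absorption of the additive constant into $O((-\log\delta_n)/\sqrt n)$ that you flag is also silently performed in the paper when it drops $\log\binom{m}{d}$. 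Your route buys two things. First, the union bound requires no independence among the differences $W_{IJ}^{(n)}$, which are correlated when the index sets overlap, so general $d$ is handled with no extra effort. Second, it bypasses the paper's reduction-to-$d=1$ step, whose justifying event inclusion is shaky for $d\geq 2$: if two $d$-subsets share the dominant side (e.g.\ $m=3$, $d=2$, log-sides $100,0,0$), one side can dominate all others by a large margin while no product of $d$ sides dominates all other products, so your pairwise decomposition is in fact the more robust treatment of the general case. What the paper's computation buys in exchange is a little extra information (each side is asymptotically the dominant one with probability $1/m$), which the lemma itself does not need.
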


\begin{proof}
Our goal is to show that as $n\rightarrow \infty$, it is with probability tending to $1$ that there exists a product $\mathfrak{p}_I^{(n)}$ which is significantly greater in magnitude than the other products $\mathfrak{p}_J^{(n)}$ for $J\neq I$.
That is, it is with probability tending to $1$ that there exists an indexing set $\mathcal{I}$ which has the largest product and is large in the sense that $\log \mathfrak{p}_{\mathcal{I}}^{(n)} - \log \mathfrak{p}_J^{(n)} \geq \alpha_n$ for all $J\neq \mathcal{I}$, where $\alpha_n$ slowly tends towards infinity. We first write for every $I$
\begin{equation}
\label{eq:log p sum of log P and log S}
\log \mathfrak{p}_I^{(n)}
\ = \
\sum_{i\in I} \log S_i^{(n)}
\ = \
\sum_{i\in I} \sum_{t\leq n} \log P_i^{(t)}.
\end{equation}
Notice that, due to the inequality below, we may reduce to the $d=1$ dimensional case, since showing that it tends to $1$ will squeeze all other probabilities. Indeed,
\begin{equation}
\Prob\left(\bigcup_{|\mathcal{\mathcal{I}}| = d} \bigcap_{J\neq \mathcal{I}} \{\log \mathfrak{p}_{\mathcal{I}}^{(n)} - \log \mathfrak{p}_J^{(n)} \geq \alpha_n\}\right)
\ \geq \
\Prob\left(\bigcup_{i=1}^m \bigcap_{j\neq i} \{\log S_i^{(n)} - \log S_j^{(n)} \geq \alpha_n\}\right).
\end{equation}
This can be seen by using the middle expression for $\log\mathfrak{p}_I^{(n)}$ in (\ref{eq:log p sum of log P and log S}).
Notice that for $\alpha_n>0$, the union of events over $i$ is disjoint, therefore we calculate
\begin{align}
\Prob\left(\bigcup_{i = 1}^m \bigcap_{j\neq i} \{\log S_i^{(n)} - \log S_j^{(n)} \geq \alpha_n\}\right) \ &= \
\sum_{i=1}^m \ 
\Prob\left(\bigcap_{j\neq i} \{\log S_i^{(n)} - \log S_j^{(n)} \geq \alpha_n\}\right)
\nonumber \\ &= \ 
\sum_{i=1}^m \
\int_{-\infty}^\infty \, f_i^{(n)}(s) \prod_{j\neq i} F_j^{(n)}(s-\alpha_n) \,ds,
\label{eq:Wafer lemma integral expression}
\end{align}
where we have used the integral version of the law of total probability with respect to the values that the maximum value $s = \log S_i^{(n)}$ may take, as well as independence of the $S_j$'s. The functions $f_j^{(n)},F_j^{(n)}$ denote the PDF and CDF of $\log S_j^{(n)}$ respectively.
One version of the Berry--Esseen theorem (cf. \cite{Berry} and \cite{Esseen}) gives us, in consideration of (\ref{eq:log p sum of log P and log S}) for each $\log S_j^{(n)}$,
\begin{equation}
\label{eq:Berry Esseen log Sj}
F_{j}^{(n)}(x) \ = \
\Phi\left(\frac{x-n\cdot \mu_P}{\sqrt{n}\cdot\sigma_P}\right) + 
O_P\left(\frac{1}{\sqrt{n}}\right)
\end{equation}
where $\Phi$ is the PDF of the standard normal $\mathcal{N}(0,1)$, and the implied constant for $O_P(1/\sqrt{n})$ is uniform over $x\in\R$.
By our convention in Definition \ref{def:LFP}, $\mu_P = \mathbb{E}[\log_B P_j^{(1)}]$ and $\sigma_P = {\rm Var}[\log_B P_j^{(1)}]$ are uniform over $1 \leq j \leq m$.
Applying (\ref{eq:Berry Esseen log Sj}) to (\ref{eq:Wafer lemma integral expression}) yields, for $1 \ll \alpha_n \ll \sqrt{n}$,
\begin{align}
\sum_{i=1}^m \int_{-\infty}^\infty
f_i^{(n)}(s)
&
\left(\Phi\left(\frac{s-n\cdot\mu_P}{\sqrt{n}\cdot\sigma_P}\right) + O_P\left(\frac{\alpha_n}{\sqrt{n}}\right)\right)^{m-1}\,ds
\nonumber \\ &\qquad\qquad = \
\left(\sum_{i=1}^m
\int_{-\infty}^\infty f_i^{(n)}(s) \Phi\left(\frac{s-n\cdot\mu_P}{\sqrt{n}\cdot\sigma_P}\right)^{m-1} \,ds\right) \ + \  
O_{P,m}\left(\frac{\alpha_n}{\sqrt{n}}\right).
\end{align}
Integrating by parts, applying (\ref{eq:Berry Esseen log Sj}) to $F_i^{(n)}$ and absorbing error, we obtain
\begin{equation}
O_{P,m}\left(\frac{\alpha_n}{\sqrt{n}}\right) + \sum_{j=1}^m
\left(
1 - 
\int_{-\infty}^{\infty}
\frac{m-1}{\sqrt{n}\cdot\sigma_P}\cdot
\Phi\left(\frac{s- n\cdot\mu_P}{\sqrt{n}\cdot\sigma_P}\right)^{m-1}
\Phi^{\prime}\left(\frac{s- n\cdot\mu_P}{\sqrt{n}\cdot\sigma_P}\right)\,ds\right).
\end{equation}
One may recognize that the above integrand has primitive $(1-\frac{1}{m})\Phi(\frac{s-n\cdot\mu_P}{\sqrt{n}\cdot\sigma_P})^m$, and so each integral contributes $1-\frac{1}{m}$, leaving us with
\begin{equation}
1 - O_{P,m}\left(\frac{\alpha_n}{\sqrt{n}}\right).
\end{equation}
Taking $\alpha_n = -\log(\delta_n/\binom{m}{d})$, we have by considering subevents
\begin{align}
\Prob\left(\mathfrak{m}_d^{(n)} \leq v_d^{(n)} \leq (1+\delta_n)\mathfrak{m}_d^{(n)}\right)
\ &\geq \ 
\Prob\left(\bigcup_{|\mathcal{I}|=d} \bigcap_{J\neq \mathcal{I}} \{\log\mathfrak{p}_{\mathcal{I}}^{(n)} - \log\mathfrak{p}_J^{(n)} \geq -\log(\delta_n/\tbinom{m}{d})\}\right)
\nonumber \\ &\geq \ 
\Prob\left(\bigcup_{i = 1}^m \bigcap_{j\neq i} \{\log S_i^{(n)} - \log S_j^{(n)} \geq -\log(\delta_n/\tbinom{m}{d})\}\right)
\nonumber \\ &= \ 
1 - O_{P,m}\left(\frac{-\log\delta_n}{\sqrt{n}}\right).
\end{align}
This finishes our proof.
\end{proof}
We claim that Lemma \ref{lem:WaferLemma} reduces the question of strong Benford behavior of $v_d^{(n)}$ to $\mathfrak{m}_d^{(n)}$. That is, the Wafer lemma implies

\begin{lem}[Reduction to Max]
Assume $\mathfrak{m}_d^{(n)}$ converges to strong Benford behavior. Then $v_d^{(n)}$ does as well.
\end{lem}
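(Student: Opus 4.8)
The plan is to deduce strong Benford behavior of $v_d^{(n)}$ from that of $\mathfrak{m}_d^{(n)}$ by applying Lemma \ref{lem:WaferLemma} with a cutoff $\delta_n$ that decays to zero slowly enough to keep the Wafer event overwhelmingly likely. Concretely, I would take $\delta_n = 1/n$ (any $\delta_n \to 0$ with $\log\delta_n = o(\sqrt n)$ works), so that $-\log\delta_n/\sqrt n = (\log n)/\sqrt n \to 0$ and hence, by Lemma \ref{lem:WaferLemma}, the event
\[
\mathcal{W}_n \ \coloneqq \ \left\{\mathfrak{m}_d^{(n)} \leq v_d^{(n)} \leq (1+\delta_n)\,\mathfrak{m}_d^{(n)}\right\}
\]
satisfies $\Prob(\mathcal{W}_n) = 1 - o(1)$. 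On $\mathcal{W}_n$ we may write $v_d^{(n)} = (1+\theta_n)\,\mathfrak{m}_d^{(n)}$ with $0 \le \theta_n \le \delta_n$, so $v_d^{(n)}$ and $\mathfrak{m}_d^{(n)}$ agree up to a multiplicative factor in $[1,1+\delta_n]$; equivalently $\log_B v_d^{(n)}$ and $\log_B \mathfrak{m}_d^{(n)}$ differ by at most $\log_B(1+\delta_n) \to 0$.

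Next I would fix $D \in [1,B]$ and run a two-sided squeeze on $\Prob(S_B(v_d^{(n)}) \le D)$. Writing $S \coloneqq S_B(\mathfrak{m}_d^{(n)})$ and $\mathfrak{m}_d^{(n)} = S\,B^k$, on $\mathcal{W}_n$ we have $v_d^{(n)} = (1+\theta_n)S\,B^k$, so either $(1+\theta_n)S < B$, in which case $S_B(v_d^{(n)}) = (1+\theta_n)S \in [S,(1+\delta_n)S]$, or an ``overflow'' $(1+\theta_n)S \ge B$ occurs, which forces $S \ge B/(1+\delta_n)$. This gives the inclusions
\[
\{S \le D/(1+\delta_n)\} \cap \mathcal{W}_n \ \subseteq \ \{S_B(v_d^{(n)}) \le D\} \ \subseteq \ \{S \le D\} \cup \{S \ge B/(1+\delta_n)\} \cup \mathcal{W}_n^c ,
\]
where in the left inclusion, for $D < B$ the bound $(1+\theta_n)S \le D$ rules out overflow, while $D = B$ is vacuous since $S_B \le B$ always. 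Taking probabilities yields
\[
\Prob\!\left(S \le \tfrac{D}{1+\delta_n}\right) - \Prob(\mathcal{W}_n^c) \ \le \ \Prob\!\left(S_B(v_d^{(n)}) \le D\right) \ \le \ \Prob(S \le D) + \Prob\!\left(S \ge \tfrac{B}{1+\delta_n}\right) + \Prob(\mathcal{W}_n^c).
\]

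Finally I would invoke the hypothesis: since $\mathfrak{m}_d^{(n)}$ converges to strong Benford behavior and the convergence in (\ref{eq:StrongBenfordBehavior}) is uniform in the threshold by compactness, we get $\Prob(S \le D/(1+\delta_n)) = \log_B\!\big(D/(1+\delta_n)\big) + o(1) \to \log_B D$, $\Prob(S \le D) \to \log_B D$, and $\Prob(S \ge B/(1+\delta_n)) = \log_B(1+\delta_n) + o(1) \to 0$; combined with $\Prob(\mathcal{W}_n^c) \to 0$, the squeeze forces $\Prob(S_B(v_d^{(n)}) \le D) \to \log_B D$ for every $D \in [1,B]$, which is exactly strong Benford behavior of $v_d^{(n)}$, and hence of $V_d^{(n)} = 2^{m-d}v_d^{(n)}$.

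The only genuine subtlety I anticipate is the overflow term $\Prob(S \ge B/(1+\delta_n))$ — precisely the phenomenon flagged in the paragraph before Lemma \ref{lem:WaferLemma}, where $\mathfrak{m}_d^{(n)}$ sits within a factor $1+\delta_n$ of a power of $B$ and the discarded mass $\sum_{J \ne \mathcal{I}} \mathfrak{p}_J^{(n)}$ can tip the leading digit over. It is harmless exactly because $\delta_n \to 0$ while $\mathfrak{m}_d^{(n)}$ is asymptotically Benford, so the mass it places near powers of $B$ vanishes; the one bookkeeping point to respect is that the Benford estimates for $\mathfrak{m}_d^{(n)}$ are applied at the $n$-dependent thresholds $D/(1+\delta_n)$ and $B/(1+\delta_n)$, which is legitimate only because the convergence is uniform.
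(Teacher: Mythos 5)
Your proposal is correct and follows essentially the same route as the paper: apply the Wafer Lemma with $\delta_n\to 0$ slowly enough that $-\log\delta_n = o(\sqrt{n})$, sandwich $\Prob(S_B(v_d^{(n)})\leq D)$ between Benford probabilities for $S_B(\mathfrak{m}_d^{(n)})$ at thresholds $D/(1+\delta_n)$ and $D$, control the overflow near $B$, and conclude by uniform convergence and squeezing. The only difference is bookkeeping: the paper folds the no-overflow condition $(1+\delta_n)S_B(\mathfrak{m}_d^{(n)})<B$ into a conditioning event $E_n$ handled by inclusion--exclusion, whereas you carry it as an explicit additive term $\Prob\bigl(S_B(\mathfrak{m}_d^{(n)})\geq B/(1+\delta_n)\bigr)$ killed by the Benford hypothesis, which if anything makes that step slightly more explicit.
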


\begin{proof}
Let $E_n$ be the event that $\mathfrak{m}_d^{(n)}$ and $v_d^{(n)}$ are a $\delta_n$-Wafer and $(1+\delta_n)S_B(\mathfrak{m}_d^{(n)}) < B$. We condition on this event to prevent an overflow of the order of magnitude. Then
\begin{equation}
S_B(\mathfrak{m}_d^{(n)})
\ \leq \
S_B(v_d^{(n)})
\ \leq \
(1+\delta_n)S_B(\mathfrak{m}_d^{(n)}).
\end{equation}
Moreover, the conditional probabilities are
\begin{equation}
\Prob(S_B(\mathfrak{m}_d^{(n)}) \leq D/(1+\delta_n) \mid E_n)
\ \leq \ 
\Prob(S_B(v_d^{(n)}) \leq D \mid E_n)
\ \leq \
\Prob(S_B(\mathfrak{m}_d^{(n)}) \leq D \mid E_n).
\end{equation}
Making basic estimates such as inclusion-exclusion, we estimate the unconditional probability as
\begin{equation}
\label{eq:uses inclusion exclusion}
\Prob\Big(S_B(\mathfrak{m}_d^{(n)})\leq D/(1+\delta_n)\Big) + \Prob(E_n) - 1
\ \leq \ 
\Prob(S_B(v_d^{(n)})\leq D)
\ \leq \ 
\Prob\Big(S_B(\mathfrak{m}_d^{(n)}) \leq D\Big) + 1 - \Prob(E_n).
\end{equation}
We show that $v_d^{(n)}$ converges to strong Benford behavior by taking $\delta_n\to0$ at a slow enough rate.

Because $\delta_n\to0$, one has
\begin{equation}
\Prob\Big((1+\delta_n)S_B(\mathfrak{m}_d^{(n)}) < B\Big) \to 1.
\end{equation}
Also, by assuming that $\delta_n$ slowly goes to zero in the sense that $\log(\frac{1}{\delta_n}) = o(\sqrt{n})$, we have by Lemma \ref{lem:WaferLemma} that
\begin{equation}
\Prob(\delta_n\text{-wafer}) \ = \ 
1 - O\left(\frac{-\log\delta_n}{\sqrt{n}}\right) \to 1.
\end{equation}
Since $E_n$ is the intersection of these two events, we see that $\Prob(E_n)\to 1$ because of an inclusion-exclusion bound that we also used to obtain (\ref{eq:uses inclusion exclusion}).
\begin{equation}
\Prob(A\cap B) \geq \Prob(A) + \Prob(B) - 1.
\end{equation}
By our assumption that $\mathfrak{m}_d^{(n)}$ converges to strong Benford behavior, we have that 
\begin{equation}
\Prob\Big(S_B(\mathfrak{m}_d^{(n)})\leq D\Big) \to \log_B(D).
\end{equation} Because of uniform convergence (due to compactness), we also have that 
\begin{equation}
\Prob\Big(S_B(\mathfrak{m}_d^{(n)})\leq D/(1+\delta_n)\Big) \to \log_B(D).
\end{equation}
Therefore by the squeeze theorem, we deduce that
\begin{equation}
\Prob\Big(S_B(v_d^{(n)})\leq D\Big) \to \log_B(D) ,
\end{equation}
provided that $\mathfrak{m}_d^{(n)}$ converges to strong Benford behavior.
\end{proof}

This proves Theorem $\ref{thm:Redux2Max}$, because $v_d^{(n)}$ and $V_d^{(n)}$ differ by only a constant multiplicative factor of $2^{m-d}$.

\section{A Family of Distributions whose Maximum Side-Lengths are Benford}
\label{sec:ExampleFamily}

For the sake of clean and transparent analysis, we select as our example family identically log-uniform distributions: $\log_B P_i \sim {\rm Uniform}(a,b)$ where $a<b \leq 0$.
By shifting and scaling each logarithm of a proportion by a constant, we realize that we may ``work'' with the normalized distribution ${\rm Uniform}(-\sqrt{3},\sqrt{3})$, which has mean zero and variance one. Of course, this means that we are no longer strictly considering a physically realistic linear fragmentation process, because the boxes $\mathfrak{B}_n$ no longer form a descending chain, however for the sake of purely analyzing the Benfordness of our system, this statistical normalization clearly generalizes, and we lose nothing by assuming it. For $1 \leq i \leq m$, we let
\begin{equation}
Z_i^{(n)} \ \coloneqq \ \frac{\log_B (P_i^{(1)} \dotsi P_i^{(n)}) - n\mu_P}{\sqrt{n}\cdot \sigma_P} \ = \
\frac{\log_B (P_i^{(1)} \dotsi P_i^{(n)})}{\sqrt{n}}.
\end{equation}
If $f_n(x)$ denotes the probability density function of any one of the random variables above, then its characteristic function is
\begin{equation}
\widehat{f_n}(k) \ \coloneqq \ \int_{-\infty}^\infty f_n(x) e^{ikx}\,dx \ = \ 
\left({\rm sinc}\left(\frac{k\sqrt{3}}{\sqrt{n}}\right)\right)^n.
\end{equation}
Note that the function ${\rm sinc}:\R\to\R$ is defined by
\begin{equation}
{\rm sinc}(u) \ \coloneqq \
\begin{cases}
\frac{\sin(u)}{u} & u \neq 0 \\
1 & u = 0
\end{cases}.
\end{equation}
The above formula for $\widehat{f_n}(k)$ follows by writing $f_n(x)$ as the $n$-fold convolution of the PDF of $P_i$, and then normalizing the PDF by subtracting mean and dividing by variance. We then use the fact that the characteristic function (or Fourier transform) turns convolutions into products.

Our goal is to produce an estimate of the closeness of the PDF $f_n(x)$ and the Gaussian function
\begin{equation}
\varphi(x) \ \coloneqq \ \frac{1}{\sqrt{2\pi}} e^{-x^2/2}.
\end{equation}
As we will see, this closeness will allow us to correctly estimate the probability of events involving the significand.

\begin{lem}
\label{lem:GoodEstimate}
The following estimate is satisfied by the random variables $Z_i^{(n)}$.
\begin{equation}
\label{eq:GoodEstimate}
f_n(x) \ = \ 
\varphi(x) + O(n^{-1+4\varepsilon}).
\end{equation}
\end{lem}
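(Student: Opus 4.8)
The plan is to obtain the estimate \eqref{eq:GoodEstimate} by Fourier inversion, writing
\[
f_n(x) - \varphi(x) \ = \ \frac{1}{2\pi}\int_{-\infty}^\infty \left(\widehat{f_n}(k) - e^{-k^2/2}\right) e^{-ikx}\,dk,
\]
and then bounding the right-hand side uniformly in $x$ by splitting the integral into three regimes in $k$: a central region $|k| \leq n^{1/2-\varepsilon}$ (say), an intermediate region $n^{1/2-\varepsilon} \leq |k| \leq \eta\sqrt{n}$ for a small fixed constant $\eta$, and a tail region $|k| > \eta\sqrt{n}$. The exponent $4\varepsilon$ in the statement strongly suggests that the central-region bound is the dominant contribution and that the $n^{-1+4\varepsilon}$ comes from a fourth-order Taylor expansion of $\log\operatorname{sinc}$ there. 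So the first step is the central estimate.

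First I would Taylor expand in the central region. Since $\operatorname{sinc}(u) = 1 - u^2/6 + u^4/120 - \cdots$ is even and analytic near $0$, one has $\log\operatorname{sinc}(u) = -u^2/6 - u^4/180 + O(u^6)$ for $|u|$ small. Substituting $u = k\sqrt{3}/\sqrt{n}$ gives
\[
\widehat{f_n}(k) \ = \ \exp\!\left(n\log\operatorname{sinc}\!\left(\tfrac{k\sqrt 3}{\sqrt n}\right)\right)
\ = \ \exp\!\left(-\frac{k^2}{2} - \frac{k^4}{20 n} + O\!\left(\frac{k^6}{n^2}\right)\right),
\]
valid as long as $k\sqrt{3}/\sqrt{n}$ stays in the region of convergence, in particular for $|k|\le n^{1/2-\varepsilon}$. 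For such $k$ one has $k^4/n = O(n^{1-4\varepsilon})$, hmm — that is too large; rather I want to be more careful: factoring out $e^{-k^2/2}$, write $\widehat{f_n}(k) - e^{-k^2/2} = e^{-k^2/2}\big(e^{-k^4/(20n)+\cdots} - 1\big)$, and bound $|e^{w}-1| \le |w| e^{|w|}$ with $w = -k^4/(20n) + O(k^6/n^2)$. Then
\[
\int_{|k|\le n^{1/2-\varepsilon}} e^{-k^2/2}\,|w|\,e^{|w|}\,dk
\ \ll \ \frac{1}{n}\int_{-\infty}^\infty e^{-k^2/2} k^4 \, e^{C k^4/n}\,dk \ \ll \ \frac{1}{n},
\]
where the Gaussian controls the growth of $k^4 e^{Ck^4/n}$ once $n$ is large (the $e^{Ck^4/n}$ factor is $1+o(1)$ on the bulk of the Gaussian mass and the far-out contribution is negligible). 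This already gives an $O(1/n)$ central contribution, which is better than $n^{-1+4\varepsilon}$; the slack $n^{4\varepsilon}$ is presumably absorbed elsewhere, e.g. to make the cutoffs and the intermediate-region estimate go through cleanly, or it may reflect a cruder but simpler bookkeeping the authors prefer.

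Next I would handle the intermediate and tail regions, where I expect the real work to lie. On $n^{1/2-\varepsilon}\le|k|\le\eta\sqrt n$ the argument $u=k\sqrt3/\sqrt n$ is bounded away from the zeros of $\operatorname{sinc}$, so $|\operatorname{sinc}(u)| \le e^{-c u^2}$ for some $c>0$ (using $\log\operatorname{sinc}(u)\le -u^2/6$ near $0$ and strict negativity on a compact piece), whence $|\widehat{f_n}(k)| \le e^{-c k^2}$ decays faster than any polynomial for $|k|\ge n^{1/2-\varepsilon}$, and likewise $e^{-k^2/2}$ is super-polynomially small there; both contribute $O(e^{-c n^{1-2\varepsilon}})$. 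For the genuine tail $|k|>\eta\sqrt n$ one needs a uniform bound $|\operatorname{sinc}(u)|\le 1-c'$ for $|u|\ge \eta\sqrt3$ away from $0$ — but $\operatorname{sinc}$ is not bounded away from $1$ only near $u=0$, it also approaches $0$, so actually $\sup_{|u|\ge \eta\sqrt 3}|\operatorname{sinc}(u)| = \rho < 1$, giving $|\widehat{f_n}(k)|\le \rho^{\,n}$ on a $k$-interval of length $O(\sqrt n)$ (and for $|k|$ beyond $O(\sqrt n)$ one combines $|\operatorname{sinc}(u)|\le 1/|u|$ to get an integrable tail), for a total contribution $O(\sqrt n\,\rho^n) = O(e^{-cn})$.

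Assembling the three pieces: central $=O(1/n)$, intermediate and tail $=O(e^{-c n^{1-2\varepsilon}})$, and dividing by $2\pi$, one gets $f_n(x)-\varphi(x) = O(n^{-1})$ uniformly in $x$, which is stronger than \eqref{eq:GoodEstimate}; I would then simply state the weaker $O(n^{-1+4\varepsilon})$ as in the lemma, noting the $\varepsilon$'s give room for the cutoff exponents. The main obstacle, as flagged, is making the central-region Taylor analysis rigorous with explicit control of the remainder — specifically justifying that $k^4 e^{Ck^4/n}$ is integrable against the Gaussian uniformly in $n$, i.e., that one may truncate at $|k|\le n^{1/2-\varepsilon}$ and keep the error $O(1/n)$ — together with pinning down the constant $c>0$ in the estimate $|\operatorname{sinc}(u)|\le e^{-cu^2}$ on the relevant intermediate range; both are elementary but require care, and the choice of the free parameter $\varepsilon$ and the cutoff $\eta$ must be coordinated so that every regime's error is $o(n^{-1+4\varepsilon})$.
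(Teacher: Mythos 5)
Your proposal is correct and follows essentially the same route as the paper: Fourier inversion of $\widehat{f_n}(k)=\operatorname{sinc}(k\sqrt3/\sqrt n)^n$, a Taylor expansion of the logarithm in a central frequency range, and the bounds $\operatorname{sinc}(u)\le 1-cu^2$ and $|\operatorname{sinc}(u)|\le|u|^{-1}$ on the intermediate and tail ranges. The only differences are bookkeeping: the paper cuts the central region at $|k|\le n^{\varepsilon}$ and accepts the error $O(n^{-1+4\varepsilon})$ there, whereas your wider cutoff and fourth-order expansion yield the (sharper, and still valid once the $e^{Ck^4/n}$ factor is absorbed into the Gaussian via $Ck^4/n\le k^2/4$ on $|k|\le n^{1/2-\varepsilon}$) bound $O(1/n)$, which of course implies the stated estimate.
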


\begin{proof}
We begin by writing, for $|k| \leq n^{\varepsilon}$,
\begin{align}
\widehat{f_n}(k) \ &= \ 
\left(1 - \frac{k^2}{2n} + O\left(\frac{k^4}{n^2} \right)\right)^n
\nonumber \\ &= \ 
\exp \left[ n \log \left(1 - \frac{k^2}{2n} + O\left(\frac{k^4}{n^2}\right)\right) \right]
\nonumber \\ &= \ 
\exp \left[ n \left(\log \left(1 - \frac{k^2}{2n}\right) + \frac{1}{1-\frac{k^2}{2n}} O\left(\frac{k^4}{n^2}\right)\right)\right]
\nonumber \\ &= \ 
\exp \left[ n \left(-\frac{k^2}{2n} + O\left(\frac{k^4}{n^2}\right) + O(1) O\left(\frac{k^4}{n^2}\right)\right)\right]
\nonumber \\ &= \ 
\exp\left[-\frac{k^2}{2n} + O(n^{-1+4\varepsilon})\right]
\nonumber \\ &= \ 
e^{-k^2/2} \cdot \left(1 + O(n^{-1+4\varepsilon})\right).
\end{align}
In terms of Fourier inversion, this allows us to manage the bulk part of our sum, namely we obtain
\begin{equation}
\frac{1}{2\pi} \int_{|k|\leq n^{\varepsilon}} \widehat{f_n}(k) e^{-ikx}\,dk \ = \ \varphi(x) + O(n^{-1+4\varepsilon}).
\end{equation}
It therefore suffices to show that we have adequate bandwidth $|k|\leq n^\varepsilon$ for recovering $f_n(x)$ from $\widehat{f_n}(k)$ as $n\to\infty$, i.e., to bound the strength of higher frequencies. Using the bound $|{\rm sinc}(u)| \leq |u|^{-1}$, we obtain for all $k$ that
\begin{equation}
|\widehat{f_n}(k)|
\ \leq \ 
\left(\frac{|k|\sqrt{3}}{\sqrt{n}}\right)^{-n}.
\end{equation}
This immediately shows that we may discard the set of frequencies $|k| \geq n^{1/2}$, since our bound yields
\begin{equation}
\int_{|k|\geq n^{1/2}} |\widehat{f_n}(k)| \,dk 
\ \leq \ 
\frac{2}{n-1} \cdot 3^{-n/2}
\ = \ 
O\left(\frac{3^{-n/2}}{n}\right).
\end{equation}

We want to show that the Fourier inversion over the middle range of frequencies $n^{\varepsilon} \leq |k| \leq n^{1/2}$ is also a small error term. This is because for $n^{\varepsilon} \leq |k| \leq n^{1/2}$, we estimate, using ${\rm sinc}(\sqrt{3}u) \leq 1 - \frac{u^2}{2.1}$ for small $u$, as well as $(1-\frac{1}{N})^N \leq e^{-1}$ for $N$ large,
\begin{equation}
0
\ \leq \ 
\widehat{f_n}(k) 
\ \leq \  \widehat{f_n}(n^{-\varepsilon})
\ \leq \ 
\left(1 - \frac{n^{-1+2\varepsilon}}{2.1}\right)^n
\ \leq \ 
e^{-n^{2\varepsilon}/2.1}.
\end{equation}
This allows us to estimate
\begin{equation}
\int_{n^\varepsilon \leq |k|
\ \leq \ 
n^{1/2}} |\widehat{f_n}(k)| \,dk \leq 2 n^{1/2} e^{-n^{2\varepsilon}/2.1}
\ = \ 
O(n^{1/2} e^{-n^{2\varepsilon}/2.1}).
\end{equation}
We therefore have proven, combining all of our estimates, that
\begin{equation}
f_n(x) \ = \ \frac{1}{2\pi}\int_{-\infty}^\infty \, \widehat{f_n}(k) e^{-ikx} \,dk = \varphi(x) + O(n^{-1+4\varepsilon}) + O(n^{1/2}e^{-n^{2\varepsilon}/2.1}) + O(3^{-n/2}/n).
\end{equation}
This yields the desired estimate.

\end{proof}

Using estimate (\ref{eq:GoodEstimate}), we are able to prove Theorem \ref{thm:SpecialCaseMaxSideLength}.

\begin{rek}
We crucially rely on the fact that the $Z_i^{(n)}$ are independent. For $d > 1$, this is no longer true, as $\mathfrak{p}_I^{(n)}, \mathfrak{p}_J^{(n)}$ share proportion cuts even if $I \neq J$. This obstruction should be able to be removed with further work.
\end{rek}

Corollary \ref{cor:BenfordPerimeter} follows from applying Theorem \ref{thm:Redux2Max} and Theorem \ref{thm:SpecialCaseMaxSideLength}. Let us now prove Theorem \ref{thm:SpecialCaseMaxSideLength}.

\begin{proof}
Because the side-lengths $S_i^{(n)} = P_i^{(1)} \dotsi P_i^{(n)}$ are independent and identically distributed, the probability density function $g_n(x)$ for the normalized random variable $(\log_B\mathfrak{m}_1^{(n)} - n\mu_P)/\sqrt{n}\sigma_P$ is given by
\begin{equation}
\label{eq:NormalizedMaxPDF}
g_n(x) \ \coloneqq \ m F_n(x)^{m-1} f_n(x).
\end{equation}
This is a basic fact about order statistics (see \cite{Mil2}).
Notice that the support of $Z_i^{(n)}$ is $[-\sqrt{3n},\sqrt{3n}]$.
Using Lemma \ref{lem:GoodEstimate}, we have
\begin{equation}
\label{eq:GoodCDF}
F_n(x) \ = \ 
\int_{-\sqrt{3n}}^x \, \varphi(x) + O(n^{-1+4\varepsilon}) \,dx \ = \ \Phi(x) + O(n^{-1/2+4\varepsilon}).
\end{equation}
This allows us to say that the maximum of approximate Gaussian random variables is approximately the maximum of Gaussian random variables, i.e., expanding (\ref{eq:NormalizedMaxPDF}) using (\ref{eq:GoodCDF}) and (\ref{eq:GoodEstimate}), one derives
\begin{equation}
g_n(x) \ = \ m\Phi(x)^{m-1}\varphi(x) + O(m^2n^{-1/2+4\varepsilon}\varphi(x)) + O(m^2n^{-3/2+8\varepsilon}) + O(mn^{-1+4\varepsilon}).
\end{equation}
Our last step is to compute the probability that $\log_B \mathfrak{m}_1^{(n)} \in (a,b) + \Z$ where $(a,b) \subset (0,1)$. This is given by integrating $g_n(x)$ over the set $E_n = (a/\sqrt{n},b/\sqrt{n}) + \mathbb{Z}/\sqrt{n}$. Thus the probability is
\begin{equation}
\int_{E_n} \, g_n(x)\,dx \ = \ 
\int_{E_n} \, m\Phi(x)^{m-1}\varphi(x) \,dx +
O(m^2n^{-1/2+4\varepsilon}) + O(m^2n^{-1+8\varepsilon}) + O(mn^{-1/2+4\varepsilon}).
\end{equation}
The integral on the right hand side represents the probability that the max of $m$ Gaussian random variables lies in the set $E_n$, and the probability approaches $(b-a)$. Indeed, one way to see this is that we are performing an improper Riemann sum of width $1/\sqrt{n}$ on the fixed Riemann-integrable function $m\Phi(x)^{m-1}\varphi(x)$, and that the set $E_n$ simply is a ``dense'' subset of the rectangles. Therefore the integral over $E_n$ in the limit approaches the ``probability'' that a chosen rectangle intersects $E_n$, which is $(b-a)$, times the limit of the improper Riemann sums of $m\Phi(x)^{m-1}\varphi(x)$, which is simply $1$. Therefore choosing $\varepsilon < 1/8$, we have
\begin{equation}
\lim_{n\to\infty} \int_{E_n} g_n(x) \,dx = (b-a).
\end{equation}
From this we deduce that the maximum perimeter sequence $\mathfrak{m}_1^{(n)}$ exhibits Strong Benford behavior.
\end{proof}

\section{Future Work} \label{sec:conclusionandfuturework}

We conjecture that the maximum criterion, i.e., the assumption in Theorem \ref{thm:Redux2Max}, holds for a large family of proportion cut distributions. More precisely, we conjecture the following.

\begin{conj}
\label{conj:MaxCriterionHolds}
Every linear-fragmentation process (that is continuous, with finite mean, variance, and third moment) satisfies the maximum criterion in all dimensions $1 \leq d \leq m$.
\end{conj}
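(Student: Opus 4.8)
We outline the route we would take toward Conjecture \ref{conj:MaxCriterionHolds}. By Theorem \ref{thm:Redux2Max} it suffices to prove the maximum criterion, i.e. that $\mathfrak{m}_d^{(n)}$ converges to strong Benford behavior, and by the standard reduction of Benford's law to equidistribution modulo one of the logarithm (the Fourier/Weyl criterion: $X_n\bmod 1$ becomes uniform iff $\mathbb{E}[e^{2\pi i \ell X_n}]\to 0$ for all $\ell\in\Z\setminus\{0\}$) this is equivalent to $\mathbb{E}\bigl[e^{2\pi i \ell\,\log_B\mathfrak{m}_d^{(n)}}\bigr]\to 0$ for every nonzero $\ell$. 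Working first under the assumptions of Definition \ref{def:LFP} (in particular Condition~(\ref{item:same mean and variance})), the key move is to trade the correlated products $\mathfrak{p}_I^{(n)}$ for independent coordinates: since $\log_B\mathfrak{p}_I^{(n)}=\sum_{i\in I}\log_B S_i^{(n)}$ and the side lengths $S_i^{(n)}$ are independent across $i$, set $Z^{(n)}=\bigl((\log_B S_i^{(n)}-\mathbb{E}\log_B S_i^{(n)})/\sqrt{n}\sigma_P\bigr)_{i=1}^{m}\in\R^m$, a vector with independent coordinates, and $M(z)=\max_{|I|=d}\sum_{i\in I}z_i$. Then $\log_B\mathfrak{m}_d^{(n)}$ equals $\sqrt{n}\sigma_P\,M(Z^{(n)})$ up to deterministic shifts and up to a perturbation of the maximizing index set supported on an event of probability $O(n^{-1/2})$ (caused by the fixed initial side lengths), none of which affects the Weyl sum, so it suffices to show $\mathbb{E}\bigl[e^{i\xi_n M(Z^{(n)})}\bigr]\to 0$ with $\xi_n=2\pi\ell\sqrt{n}\sigma_P\to\infty$. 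This is the point at which the obstruction noted before the conjecture dissolves: the $\mathfrak{p}_I^{(n)}$ are correlated, but the coordinates of $Z^{(n)}$ are not, all dependence now being carried by the piecewise-linear map $M$.

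The next ingredient is a multivariate local limit theorem: because $Z^{(n)}$ has independent coordinates, its density $\psi_n$ is an $m$-fold tensor product of the density $f_n$ of a single centered-scaled side length, so the one-dimensional estimate $f_n=\varphi+O(n^{-1/2+\varepsilon})$ — the natural generalization of Lemma \ref{lem:GoodEstimate} from log-uniform cuts to arbitrary continuous cuts of finite third moment, via a Taylor expansion of the characteristic function for $|k|\le n^{\varepsilon}$ together with Cram\'er's condition (automatic for an absolutely continuous law, after at most a standard mollification) on the complementary range — plus $\|f_n-\varphi\|_{L^1}\to 0$ yields $\psi_n=\varphi_m+O(n^{-1/2+\varepsilon})$ uniformly and $\|\psi_n-\varphi_m\|_{L^1(\R^m)}\to 0$, where $\varphi_m$ is the standard Gaussian density on $\R^m$. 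The final step is an oscillatory-integral estimate. Partition $\R^m$ into the open convex polyhedral cones $R_I=\{z:\langle e_I-e_J,z\rangle>0\text{ for all }J\neq I\}$, with $e_I=\sum_{i\in I}e_i$, on which $M(z)=\langle e_I,z\rangle$. Then
\begin{equation}
\mathbb{E}\bigl[e^{i\xi_n M(Z^{(n)})}\bigr] \ = \ \sum_{|I|=d}\int_{R_I}e^{i\xi_n\langle e_I,z\rangle}\,\psi_n(z)\,dz \ = \ \sum_{|I|=d}\int_{R_I}e^{i\xi_n\langle e_I,z\rangle}\,\varphi_m(z)\,dz \ + \ O\bigl(\|\psi_n-\varphi_m\|_{L^1}\bigr),
\end{equation}
the error being $o(1)$ uniformly in $\xi_n$ since the $R_I$ partition $\R^m$. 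For each $I$, rotating coordinates so the first axis points along $e_I/\|e_I\|$ leaves $\varphi_m$ invariant and turns $R_I$ into a convex cone whose slices in that variable are intervals; one integration by parts bounds the slice integral of $e^{i\xi_n\sqrt{d}\,u_1}\varphi(u_1)$ by $C/|\xi_n|$ uniformly in the transverse coordinates, so each cone contributes $O(1/|\xi_n|)$ and the finite sum is $O(1/(\ell\sqrt{n}))\to 0$. Hence the Weyl sums vanish, $\mathfrak{m}_d^{(n)}$ is strong Benford, and Theorem \ref{thm:Redux2Max} finishes this case.

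To obtain the conjecture in full generality one must also drop Condition~(\ref{item:same mean and variance}); following the remark after Theorem \ref{thm:Redux2Max}, a concentration argument should do this. If $\mu^{\ast}=\max_i\mathbb{E}[\log_B P_i]$ with top-index set $T$, then for $n$ large every $d$-set not using as many indices of $T$ as possible has $\log_B\mathfrak{p}_I^{(n)}$ smaller than the best one by order $n$ with probability $1-o(1)$, so up to a negligible event $\mathfrak{m}_d^{(n)}$ is a maximum of products whose index sets share a forced deterministic block of top indices and differ only inside a tied-mean block; after standardizing coordinates this is again of the form studied above — a maximum of linear forms in independent (now possibly anisotropically scaled) coordinates, with unequal variances merely making the limiting Gaussian anisotropic without changing the cone decomposition. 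We expect the oscillatory-integral step to be routine (it is the one-dimensional integration by parts applied slicewise) and the $L^1$ local limit theorem to be classical; the genuine work, and the reason we state this only as a conjecture, is in the bracketing reductions: carrying out the concentration argument cleanly enough that the residual maximum really is of the ``max of linear forms in independent coordinates'' type once Condition~(\ref{item:same mean and variance}) is removed, and handling the extreme end of ``continuous'', where an absolutely continuous cut whose characteristic function decays arbitrarily slowly need not directly admit the Fourier-inversion density estimate, forcing either a mild extra regularity hypothesis or a mollification argument that one must verify leaves the significand statistics unchanged.
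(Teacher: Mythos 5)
You should note at the outset that the paper contains no proof of this statement: it is posed as an open conjecture in Section \ref{sec:conclusionandfuturework}, and the only case actually established in the paper is $d=1$ with log-uniform cuts (Theorem \ref{thm:SpecialCaseMaxSideLength}). So the question is not whether you match the paper's argument but whether your outline closes the conjecture; by your own admission it does not, and you are right to say so. The strongest part of your proposal is genuinely new relative to the paper: writing $\log_B\mathfrak{p}_I^{(n)}=\sum_{i\in I}\log_B S_i^{(n)}$, passing to the independent coordinate vector, and decomposing the Weyl sum over the convex regions where each index set is the strict argmax directly attacks the dependence obstruction the paper flags (the remark before the proof of Theorem \ref{thm:SpecialCaseMaxSideLength}, that the $\mathfrak{p}_I^{(n)}$ share cuts for $d>1$). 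The slicewise integration by parts against the Gaussian is correct and uniform, since slices of a convex region in the $e_I$ direction are intervals and $\bigl|\int_a^b e^{i\lambda u}\varphi(u)\,du\bigr|\leq C/|\lambda|$ uniformly in $a,b$. Two of your own worries are also removable: (i) you do not need the sup-norm estimate $f_n=\varphi+O(n^{-1/2+\varepsilon})$ at all — your error term only uses $\|\psi_n-\varphi_m\|_{L^1}$, and total-variation convergence of standardized sums to the Gaussian is classical (Prokhorov) and automatic once the cuts have densities, so the ``slowly decaying characteristic function'' caveat and the mollification fallback (which changes the process and is not justified) can simply be dropped; (ii) the initial side lengths need no $O(n^{-1/2})$ argmax-stability claim (which you do not justify, and for which no rate is available from $L^1$ convergence alone) — including the constants merely shifts the half-space constraints, the regions remain convex and disjoint, the per-region phase has modulus one, and the same slicing bound applies verbatim.

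The genuine gaps are the ones you name, and they are real. First, your argument as given lives inside Definition \ref{def:LFP}, Condition (\ref{item:same mean and variance}): equal $\mu_P$ and $\sigma_P$ is what makes the coordinatewise standardization commute with the maximum over $d$-element index sets, so that $\log_B\mathfrak{m}_d^{(n)}$ really is an affine image of $M(Z^{(n)})$. The reduction you sketch for unequal means (concentration onto the top-drift block, then an anisotropic Gaussian limit) is plausible but is precisely the unproved step; with unequal variances inside a tied block the maximum of differently scaled linear forms must be handled without per-coordinate standardization, and none of the quantitative bracketing is carried out. Second, the stated justification of the pointwise density estimate is wrong as written: Cram\'er's condition is automatic for absolutely continuous laws but yields Edgeworth-type expansions of the distribution function, not density asymptotics, which require integrability of some power of the characteristic function — this error is harmless only because, as noted above, that estimate is not needed. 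As it stands, then, this is a credible program, and the equal-parameter case (the conjecture as literally scoped by Definition \ref{def:LFP}) looks completable along your lines and would strictly extend Theorem \ref{thm:SpecialCaseMaxSideLength} to all $1\leq d\leq m$; but it is an outline with acknowledged holes, not a proof of the conjecture.
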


From our work, this conjecture implies the following corollary.
\begin{cor}[Strong Benfordness]
\label{cor:LFP is StrongBenford}
Assume that Conjecture \ref{conj:MaxCriterionHolds} holds. Then every linear-fragmentation process satisfies the strong form of Benford's law for all dimensions $1 \leq d \leq m$.
\end{cor}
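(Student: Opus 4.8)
The plan is to observe that this corollary is an immediate formal consequence of Theorem \ref{thm:Redux2Max} together with the stated hypothesis, so that the entire content lies in the (conjectural) maximum criterion and in the reduction already established in Section \ref{sec:Redux2Max}. Recall that Conjecture \ref{conj:MaxCriterionHolds} asserts precisely that, for an arbitrary linear-fragmentation process (continuous, with finite mean, variance and third moment) and for every dimension $1 \le d \le m$, the maximum product of $d$ sides $\mathfrak{m}_d^{(n)}$ converges to strong Benford behavior as $n \to \infty$. This is exactly the premise ``$\mathfrak{m}_d^{(n)}$ converges to strong Benford behavior'' appearing in the hypothesis of Theorem \ref{thm:Redux2Max}.

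First I would fix a linear-fragmentation process $\mathfrak{B}_0 \supset \mathfrak{B}_1 \supset \dotsi$ and a dimension $d$ with $1 \le d \le m$. By Conjecture \ref{conj:MaxCriterionHolds}, the hypothesis of Theorem \ref{thm:Redux2Max} holds for this process and this $d$; applying Theorem \ref{thm:Redux2Max} then yields that $V_d^{(n)} = \Vol_d(\mathfrak{B}_n)$ converges to strong Benford behavior. Since $d$ was an arbitrary element of $\{1,\ldots,m\}$, this establishes the strong form of Benford's law for the frame $d$-volumes in all dimensions, which is the assertion of the corollary. No passage to a subsequence, no extra uniformity, and no new probabilistic input is required beyond what Theorem \ref{thm:Redux2Max} already packages (the Wafer Lemma and the squeeze argument of Section \ref{sec:Redux2Max}).

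The only point deserving a line of justification — and it is the closest thing to an obstacle here — is verifying that the class of processes quantified over in Conjecture \ref{conj:MaxCriterionHolds} coincides with the class to which Theorem \ref{thm:Redux2Max} applies. Both are stated for linear-fragmentation processes in the sense of Definition \ref{def:LFP}, so no compatibility issue arises: in particular the normalization in condition \ref{item:same mean and variance} of Definition \ref{def:LFP} is built into the definition and is automatically inherited by any process to which the conjecture refers. Hence the proof is genuinely a two-line deduction, and I would write it as such, emphasizing that all the difficulty has been isolated into Conjecture \ref{conj:MaxCriterionHolds}.
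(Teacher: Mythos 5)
Your proposal is correct and coincides with the paper's reasoning: the corollary is stated as an immediate consequence of Theorem \ref{thm:Redux2Max}, since Conjecture \ref{conj:MaxCriterionHolds} supplies exactly the hypothesis (strong Benfordness of $\mathfrak{m}_d^{(n)}$) needed to apply that theorem for each $1 \leq d \leq m$. The paper offers no further argument, so your two-line deduction is precisely what is intended.
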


While the tools we have employed thus far in our work with linear-fragmentation processes are distinct from the methods used previously in working with branching-fragmentation processes, the only substantial difference between linear-fragmentation and branching-fragmentation is the presence of a binary tree of weakly correlated events. Applying linearity of expectation, one sees that the expectation values of the leaves of the tree are the expectation value of the end of a linear-fragmentation process with the same height. Therefore we obtain the following corollary.

\begin{cor}
Assume that Conjecture \ref{conj:MaxCriterionHolds} holds. Then every branching-fragmentation process (with proportion cuts $P_i$ as in Definition \ref{def:LFP}) satisfies
\begin{equation}
\lim_{n\to\infty}\mathbb{E}[\rho_d^{(n)}(s)] \ = \ 
\log_B(s),
\end{equation}
where
\begin{equation}
\rho_d^{(n)}(s) \ = \ \frac{1}{2^{mn}} \sum_{i=1}^{2^{mn}}\varphi_s(\Vol_d(\mathfrak{B}_{i})).
\end{equation}
\end{cor}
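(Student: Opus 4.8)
The plan is to reduce, by linearity of expectation, to the marginal law of a single uniformly chosen leaf box, to recognize that law as the terminal box of an honest linear-fragmentation process built from the symmetrized proportion cuts, and then to quote Corollary~\ref{cor:LFP is StrongBenford} (the only place Conjecture~\ref{conj:MaxCriterionHolds} enters). Fix $s\in[1,B]$, recall that $\varphi_s$ is the ``significand at most $s$'' indicator so that $\mathbb{E}[\varphi_s(\Vol_d(\mathfrak B_i))]=\Prob\big(S_B(\Vol_d(\mathfrak B_i))\leq s\big)$, and write, using linearity of expectation and then the law of total probability over an index $U$ drawn uniformly from $\{1,\dots,2^{mn}\}$ independently of the fragmentation,
\begin{equation}
\mathbb{E}[\rho_d^{(n)}(s)] \ = \ \frac{1}{2^{mn}}\sum_{i=1}^{2^{mn}}\Prob\big(S_B(\Vol_d(\mathfrak B_i))\leq s\big) \ = \ \Prob\big(S_B(\Vol_d(\mathfrak B_U))\leq s\big).
\end{equation}

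Next I would identify the law of $\mathfrak B_U$. A leaf of the fragmentation tree is pinned down by, for each of the $n$ rounds and each of the $m$ axes, which side of the corresponding cut the leaf lies on; that is $mn$ binary choices, and the cut faced at each such choice is a fresh independent draw from the relevant axis's proportion distribution, the leaf keeping the ``$P$'' piece or the ``$1-P$'' piece accordingly. Choosing $U$ uniformly amounts to making these $mn$ choices by independent fair coin flips, independent of every cut value, so along axis $j$ the side length of $\mathfrak B_U$ is a product of $n$ i.i.d.\ factors each distributed as the symmetrization $\tilde P_j$ of $P_j$ (``equal to $P_j$ with probability $\tfrac12$ and to $1-P_j$ with probability $\tfrac12$''), with the $m$ side lengths built from disjoint batches of cuts. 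Hence $\mathfrak B_U$ has exactly the law of the terminal box $\mathfrak B_n$ of a linear-fragmentation process started from the same initial box with time-independent proportion cuts $\tilde P_1,\dots,\tilde P_m$.

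Finally I would check that this is a genuine linear-fragmentation process and invoke Corollary~\ref{cor:LFP is StrongBenford}: the $\tilde P_j$ are absolutely continuous, with density $\tfrac12 f_j(x)+\tfrac12 f_j(1-x)$ on $(0,1)$, and inherit finite mean, variance, and third moment from $P_j$ and $1-P_j$; when the $P_j$ are identically distributed so are the $\tilde P_j$ and condition~(\ref{item:same mean and variance}) of Definition~\ref{def:LFP} holds, and otherwise one appeals to the remark after Theorem~\ref{thm:Redux2Max} that this condition may be dropped. Corollary~\ref{cor:LFP is StrongBenford} then gives $\Prob\big(S_B(\Vol_d(\mathfrak B_n))\leq s\big)\to\log_B(s)$, whence $\mathbb{E}[\rho_d^{(n)}(s)]\to\log_B(s)$, as claimed. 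The main obstacle is this middle step: one must verify carefully that averaging over the correlated leaves genuinely reproduces a single independent linear-fragmentation process with the symmetrized cuts --- in particular that the leaf-selecting coin flips are mutually independent across all $mn$ of these choices and independent of every cut --- and that the symmetrized cuts still fall under the hypotheses for which the Maximum Criterion is available. The naive alternative of applying the corollary to each leaf separately and averaging does not work directly, since the number of leaves grows with $n$ and one would need the convergence to be uniform across leaves.
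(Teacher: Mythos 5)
Your proposal is correct and follows the same overall route the paper sketches: linearity of expectation reduces the leaf average to the expectation at the end of a single linear-fragmentation process of the same height, to which Corollary \ref{cor:LFP is StrongBenford} (the only place Conjecture \ref{conj:MaxCriterionHolds} enters) is applied. Where you go beyond the paper's one-paragraph sketch is the identification step, and this is a worthwhile refinement: the paper simply asserts that the leaves' expectations equal that of ``a linear-fragmentation process with the same height,'' whereas a \emph{fixed} leaf actually sees along its path a deterministic but time-varying pattern of $P$ versus $1-P$ factors, which does not literally satisfy item (3) of Definition \ref{def:LFP} (cut distributions fixed over all $n$) unless $P$ is symmetric about $1/2$. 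Your device of first passing to a uniformly chosen leaf $U$, so that each factor becomes an independent fair mixture $\tilde P_j$ of $P_j$ and $1-P_j$ (density $\tfrac12 f_j(x)+\tfrac12 f_j(1-x)$, continuous, with finite moments), produces a genuinely time-homogeneous linear-fragmentation process to which the conjecture and Corollary \ref{cor:LFP is StrongBenford} apply verbatim, and your handling of condition (\ref{item:same mean and variance}) (identically distributed cuts, or the remark following Theorem \ref{thm:Redux2Max}) is the right disclaimer. Both arguments reach the same conclusion, but yours closes the small gap between the paper's per-leaf heuristic and the formal hypotheses of Definition \ref{def:LFP}; the only care needed, which you note, is that the leaf-selecting coin flips are independent of all cut values, so the marginal law of $\mathfrak{B}_U$ at each $n$ coincides exactly with that of the symmetrized process at time $n$.
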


\subsection{Restating the Mellin Condition in Terms of Characteristic Functions}

Our last remark section for this paper concerns how to interpret the Mellin condition that is presented in the works of \cite{Beck} and \cite{DM} in terms of characteristic functions for the logarithm of proportion cuts.

If $f(t)$ is the probability density of $P\in(0,1)$ where $P$ is a proportion cut, then we want to state the Mellin condition
\begin{equation}
\lim_{n\to\infty} \sum_{\substack{\ell = -\infty \\ \ell \neq 0}}^\infty \left| \mathcal{M}[f]\left(1+\frac{i \ell}{\log B}\right) \right|^n \ = \ 0,
\end{equation}
in terms of a characteristic function condition for $f_n(x)$, the probability density of $Z^{(n)}$, defined as
\begin{equation}
Z^{(n)} \ \coloneqq \ \frac{\log_B(P^{(1)} \dotsi P^{(n)}) - n\mu_P}{\sqrt{n}\cdot \sigma_P},
\end{equation}
where $\mu_P,\sigma_P$ are constants which denote the mean and variance of $\log_B P$.

\begin{rek}
We have slightly modified the Mellin condition originally specified in equation \ref{eq:MellinCondition}. The reason for this change is that we want (i) an arbitrary base $B$, (ii) an answer in terms of characteristic functions rather than Fourier transforms (dropping $2\pi$), and (iii) we may without loss of generality always take $f_u = f$ (see Remark 1.7 of \cite{DM}).
\end{rek}

Now take $t = P$, $s = \log_B P$, and define
\begin{equation}
g(s) = f(B^s) \cdot B^{s} \ln(B).
\end{equation}
A change of variables from $t$ to $s$ gives 
\begin{equation}
\mathcal{M}[f]\left(1+\frac{i \ell}{\log B}\right) \ = \ 
\int_0^\infty f(t) t^{i\ell/\ln(B)} \,dt \ = \ 
\int_{-\infty}^\infty g(s) e^{is\ell}\,ds
\ = \ 
\widehat{g}(\ell)
\end{equation}

We have by definition of the random variable $Z^{(n)}$ that $f_n(x) = \sqrt{n}\cdot \sigma_P \cdot (g^{*n})(n\mu_P + \sqrt{n}\cdot \sigma_P x)$. Without loss of generality assume that $\mu_P = 0$ and $\sigma_P = 1$. Then one obtains by applying the characteristic transform
\begin{equation}
\widehat{f_n}(k) = (\widehat{g})^n(k/\sqrt{n}).
\end{equation}
Therefore we see that the Mellin condition is equivalent to the statement that
\begin{equation}
\lim_{n\to\infty}
\sum_{\substack{\ell = -\infty \\ \ell \neq 0}}^\infty |\widehat{f_n}(\ell\sqrt{n})|
\ = \ 0.
\end{equation}
Thus we observe that this is a very mild regularity condition, since we expect $\widehat{f_n}(k) \approx e^{-k^2/2}$ for nice $P$. We have seen this condition concretely hold for the family of proportion distributions in Section \ref{sec:ExampleFamily}.







\newpage

\newpage


\begin{thebibliography}{9999999}

\bibitem[AS]{AS}
M. Abromovich, I. A. Stegun, \emph{Handbook of mathematical functions with formulas, graphs, and mathematical tables}, tenth printing, National Bureau of Standards, Applied Mathematics Series \textbf{55}, 1972.

\bibitem[B--]{Beck}
Becker, Thealexa and Burt, David and Corcoran, Taylor C. and Greaves-Tunnell, Alec and Iafrate, Joseph R. and Jing, Joy and Miller, Steven J. and Porfilio, Jaclyn D. and Ronan, Ryan and Samranvedhya, Jirapat and et al., \emph{Benford’s law and continuous dependent random variables}, Annals of Physics \textbf{338} (2018), 350-381.

\bibitem[Ben]{Ben}
F. Benford, \emph{The Law of Anomalous Numbers}, Proceedings of the American Philosophical Society \textbf{78} (1938), 551-572.

\bibitem[BeHi]{BeHi}
A. Berger, T.P.Hill \emph{An Introduction to Benford's Law}, Princeton University Press, 2015

\bibitem[BH2]{BH2}
A. Berger, and T. P. Hill, {\em Benford Online Bibliography}, \textcolor{blue}{\url{http://www.benfordonline.net}}.

\bibitem[Berry]{Berry}
A. C. Berry, \emph{The Accuracy of the Gaussian Approximation to the Sum of Independent Variates}, Transactions of the American Mathematical Society \textbf{49} (1941), 122-136.

\bibitem[CLM12]{CLM12}
V. Cuff, A. Lewis and S. J. Miller, \emph{The Weibull distribution and Benford's law}, Involve, a Journal of Mathematics 8-5 (2015), 859--874. DOI 10.2140/involve.2015.8.859.

\bibitem[Dia]{Dia} \newblock P. Diaconis, \emph{The distribution of leading
digits and uniform distribution mod 1}, Ann. Probab. \textbf{5}
(1979), 72--81.

\bibitem[DM]{DM}
I. Durmić and S. J. Miller, \emph{Benford Behaviour of a Higher Dimensional Fragmentation Process}, preprint 2023. \textcolor{blue}{\url{https://librarysearch.williams.edu/permalink/01WIL_INST/1faevhg/alma991013795585602786}}.

\bibitem[Esseen]{Esseen}
C. Esseen, \emph{On the Liapounoff limit of error in the theory of probability.}, Arkiv För Matematik, Astronomi Och Fysik, (1942). 

\bibitem[Hi1]{Hi2}
T. P. Hill, \emph{A Statistical Derivation of the Significant-Digit Law}, Statistical Science \textbf{10} (1995), no. 4, 354-363.

\bibitem[Hi2]{Hi1} \newblock T. P. Hill, \emph{The first-digit phenomenon}, American Scientists \textbf{86}  (1996), 358--363.

\bibitem[JKKKM]{JKKKM}
D. Jang, J. U. Kang, A. Kruckman, J. Kudo and S. J. Miller, \emph{Chains of distributions, hierarchical Bayesian models and Benford's Law}, Journal of Algebra, Number Theory: Advances and Applications, volume 1, number 1 (March 2009), 37--60.

\bibitem[Jing]{Jing}
Joy Jing, \emph{Benford's Law and Stick Decomposition}.

\bibitem[KM]{KM}
A. Kontorovich and S. J. Miller, \emph{Benford's Law, values of $L$-functions and the $3x+1$
problem}, Acta Arithmetica \textbf{120} (2005), no. 3, 269--297.

\bibitem[Kh]{Kh}
\newblock A. Y. Khinchin, \emph{Continued Fractions},  Third
Edition, The University of Chicago Press, Chicago 1964.

\bibitem[LSE]{LSE}
L. M. Leemis, B. W. Schmeiser and D. L. Evans, \emph{Survival Distributions Satisfying Benford's Law}, The American Statistician \textbf{54} (2000), no. 3.

\bibitem[Lemons]{Lemons}
Don. S. Lemons, “On the Numbers of Things and the Distribution of First Digits,” American Journal of Physics (1986), 816--817.

\bibitem[Mil1]{Mil1}
\newblock S. J. Miller, \emph{Benford's Law: Theory and Applications}, Princeton University Press, Princeton, NJ, 2015.

\bibitem[Mil2]{Mil2}
\newblock S. J. Miller, \emph{The Probability Lifesaver}, Princeton University Press, Princeton, NJ, 2017. \textcolor{blue}{\url{https://doi.org/10.1515/9781400885381}}.

\bibitem[MT-B]{MT-B}
\newblock S. J. Miller and R. Takloo-Bighash, \emph{An Invitation to Modern Number Theory}, Princeton University Press, Princeton, NJ, 2006.

\bibitem[MiNi1]{MiNi1}
\newblock S. J. Miller and M. Nigrini, \emph{The Modulo $1$ Central Limit Theorem and Benford's Law for Products}, International Journal of Algebra \textbf{2} (2008), no. 3, 119--130.

\bibitem[MiNi2]{MiNi2}
S. J. Miller and M. J. Nigrini, \emph{Order Statistics and Benford's Law}, International Journal of Mathematics and Mathematical Sciences, (2008), 1-13.

\bibitem[Ne]{Ne} \newblock S. Newcomb, \emph{Note on the frequency of use
of the different digits in natural numbers}, Amer. J. Math.
\textbf{4} (1881), 39-40.

\bibitem[Nig]{Nig}
M. J. Nigrini and S. J. Miller, \emph{Data diagnostics using second order tests of Benford's Law}, John Wiley\&Sons, Inc., Hoboken, New Jersey, 2012

\bibitem[NiMi]{NiMi}
M. J. Nigrini, \emph{Benford's Law: Applications for Forensic Accounting, Auditing, and Fraud Detection}, Auditing: A Journal of Practice and Theory \textbf{28} (2009), no. 2, 305--324.

\bibitem[RSZ]{RSZ}
\newblock Z. Rudnick, P. Sarnak, and A. Zaharescu, \emph{The
Distribution of Spacings Between the Fractional Parts of
$n^2\alpha$}, Invent. Math. \textbf{145} (2001), no. 1, 37--57.

\bibitem[Rai]{Rai}
R. A. Raimi, \emph{The First Digit Problem}, The American Mathematical Monthly, \textbf{83:7}  (1976), no. 7, 521-538.

\bibitem[Sta]{Sta}
E. W. Stacy, \emph{A Generalization of the Gamma Distribution}, The Annals of Mathematical Statistics \textbf{33} (1962), no. 3, 1187-1192.

\end{thebibliography}
\end{document}